\documentclass[11pt,a4paper]{article}

\usepackage{amsmath}
\usepackage{mathtools}
\usepackage{amssymb}
\usepackage{amsthm}
\usepackage{mathrsfs}
\usepackage{tikz-cd}
\usepackage{enumitem}
\usepackage[bb=boondox]{mathalfa}
\usepackage[margin=2cm]{geometry}

\newcommand{\abstrcompldom}{\mathfrak{d}^{c}}
\newcommand\initial{\mathbb{0}}
\newcommand\Fam{\mathbf{Fam}}
\newcommand\Bool{\mathbf{Bool}}

\newcommand\Famm[1]{\mathbf{Fam}\left(#1\right)}

\newcommand\Conn{\mathbf{Conn}}
\newcommand\fin{\mathsf{fin}}
\newcommand\pb{\mathsf{pb}}
\newcommand\FinFam{\mathbf{FinFam}}
\newcommand\Cat{\mathbf{Cat}}
\newcommand\CAT{\mathbf{CAT}}
\newcommand\Set{\mathbf{Set}}
\newcommand\finSet{\mathbf{finSet}}
\newcommand\Topl{\mathbf{Top}}
\newcommand\LocConTopl{\mathbf{LocConTop}}

\newcommand\PsAlg{\mathbf{PsAlg}}
\newcommand\Dist{\mathbf{Dist}}
\newcommand\wCPO{\omega\dash\mathbf{CPO}}

\newcommand{\cat}[1]{\mathbb{#1}}
\newcommand{\terminal}{\mathbb{1}}
\let\lim\relax
\DeclareMathOperator*\lim{\mathsf{lim}}
\DeclareMathOperator*\colim{\mathsf{colim}}

\newcommand{\Tt}{\mathcal T}

\newcommand{\TPsAlg}{\Tt\dash\PsAlg}
\newcommand{\FamT}{\Fam_\Tt}
\newcommand{\yy}{\mathfrak y}
\newcommand{\uu}{\mathfrak u}
\newcommand{\uuo}{\overline{\uu}}

\newcommand{\Ll}{\mathcal L} 
\newcommand{\Pp}{\mathcal P}
\newcommand{\Lfin}{\Ll_\fin}
\newcommand{\Lpb}{\Ll_\pb}

\newcommand{\Colim}{\mathbf{Colim}}
\newcommand{\Lim}{\mathbf{Lim}}
\newcommand{\PhiLim}{\Phi\dash \Lim}
\newcommand{\PhiColim}{\Phi\dash \Colim}

\renewcommand{\emptyset}{\varnothing}
\newcommand{\iso}{\cong}
\newcommand{\eqv}{\simeq}
\newcommand{\adj}{\dashv}
\newcommand{\dash}{\text{-}}
\newcommand{\comma}{\downarrow}
\newcommand{\pt}{\ast}
\newcommand{\op}{\mathsf{op}}
\DeclareMathOperator{\ob}{\mathsf{ob}}
\newcommand{\expn}{\Rightarrow}

\theoremstyle{plain}
\newtheorem{lemma}{Lemma}[section]

\newtheorem{corollary}[lemma]{Corollary}
\newtheorem{theorem}[lemma]{Theorem}

\theoremstyle{definition}
\newtheorem{definition}[lemma]{Definition}
\newtheorem{remark}[lemma]{Remark}

\let\sum\relax
\newcommand{\sum}{\coprod}

\title{Free extensivity via distributivity}
\author{Fernando Lucatelli Nunes, Rui Prezado, Matthijs Vákár}
\date{}

\begin{document}

\maketitle

\abstract{We consider the canonical pseudodistributive law between various
free limit completion pseudomonads and the free coproduct completion
pseudomonad. When the class of limits includes pullbacks, we show that this
consideration leads to notions of extensive categories. More precisely, we
show that \textit{extensive categories with pullbacks} and \textit{infinitary
lextensive categories} are the pseudoalgebras for the pseudomonads resulting
from two of these pseudodistributive laws. Moreover, we introduce the notion
of \textit{doubly-infinitary lextensive category}, and we establish that the
freely generated ones are cartesian closed. From this result, we further
deduce that, in freely generated infinitary lextensive categories, the objects
with a finite number of connected components are exponentiable. We conclude
our work with remarks on examples, descent theoretical aspects of this work,
results concerning non-canonical isomorphisms, and relationship with other
work. 
\\
\\ \textbf{Keywords:} free (co)limit completion, free coproduct completion,
exponentiable object, (co)lax idempotent pseudomonad, extensive category,
pseudodistributive law, cartesian closed category, multicategory,
bicategorical biproducts
\\
\\ \textbf{Mathematics Subject Classification:} 18N15 18D65 18A35 18B50 18D15
18A30 18N10 68N18 }

\section*{Introduction}

Two-dimensional monad theory~\cite{BKP89, Lac00, Bou14, Luc16} is the
categorical approach to bidimensional universal algebra, which mainly deals with
the problem of understanding \textit{algebraic structures}, in a suitable sense,
over objects in a $2$-category.

Focusing on the case where the base $2$-category is the $2$-category of
categories $\CAT$, this leads to the systematic study of categories with
additional \textit{(algebraic)} structures (or properties)~\cite{BKP89, KL97, Luc16}.
The $2$-categories of interest usually arise as $2$-categories of pseudoalgebras
or lax algebras of a given pseudomonad -- we refer, for instance,
to~\cite{Lac02, Luc18b} for the definitions of these concepts.

There are many well-known examples of such $2$-categories of interest, namely:
\begin{itemize}[label=--]
    \item 
        the 2-category of monoidal categories, monoidal functors and monoidal
        natural transformations is the $2$-category of pseudoalgebras for the
        free monoid 2-monad (also known as the \textit{list} 2-monad)  on $\CAT
        $, \textit{e.g.} \cite{BKP89, Her00, Luc19};
    \item 
        the 2-category of monads is given by the 2-category of lax algebras
        w.r.t. the identity $2$-monad on $\CAT$, \textit{e.g.}
        \cite[pag.~33]{Luc18x} and \cite{Luc18b};
    \item 
        $2$-categories of pseudofunctors and pseudonatural transformations
        between two suitable $2$-categories with weighted bicolimits is given
        by the $2$-category of pseudoalgebras w.r.t. a pseudomonad induced by
        a suitable pseudo-Kan extension, \textit{e.g.} \cite{Luc16, Luc18a}; 
    \item  
        the $2$-category of categories with \( \Phi \)-(co)limits and \(
        \Phi\)-(co)limit preserving functors is the 2-category of
        pseudoalgebras and pseudomorphisms w.r.t. a suitable free (co)limit
        completion pseudomonad on $\CAT $, \textit{e.g.} \cite{KL97, Mar97,
        PCW00, Luc19}.
\end{itemize}
 
The framework of two-dimensional monad theory is well-suited for studying the
age-old problem of distributivity between limits and colimits of a given
category. Specifically, our focus lies on the canonical
\textit{pseudodistributive law}~\cite{Mar99, Mar04} between various sorts of
\textit{free limit completion} pseudomonads and the \textit{free coproduct
completion} pseudomonad. Previous considerations of such distributivity
properties include (infinitary) distributive categories~\cite{CLW93},
completely distributive categories~\cite{MRW12}, and doubly-infinitary
distributive categories~\cite{LV24}. In this paper, we show that a similar
analysis gives rise to well-known and novel notions of \textit{extensive
categories}. 

Recall that, if \( \cat C \) has (in)finite coproducts, \( \cat C \) is said
to be an (\textit{infinitary}) \textit{extensive category}~\cite{CLW93} if the
canonical functor 
\begin{equation*}
    \begin{tikzcd}
        \displaystyle\prod\limits_{i \in I} \cat C \comma X_i 
            \ar[r,"\sum"]
        & \cat C \comma \displaystyle\sum\limits_{i \in I} X_i
    \end{tikzcd}
\end{equation*}
is an equivalence of categories for every (in)finite family \( (X_i)_{i \in I}
\) of objects in \( \cat C \). It has been observed in~\cite{CV04}
and~\cite[Section 7]{PL23b} that ``(infinitary) extensivity'' can be viewed as
a distributivity condition of pullbacks over (infinitary) coproducts.

The present work, which is a sequel to~\cite{LV24}, aims to study categories
with a given class of limits, small coproducts, and a (pseudo)distributive law
between them. More precisely, given a class \( \Phi \) of diagrams, we remark
that there is a canonical pseudodistributive law between the free \( \Phi
\)-limit completion pseudomonad and the free coproduct completion, denoted by
\( \Fam \)~\cite{Zob76, Koc95, Mar97}. We show that the pseudoalgebras for the
composite pseudomonad can be easily described; namely, they can be given as
the categories with \( \Phi \)-limits and coproducts such that the coproduct
functor
\begin{equation}
    \label{eq:intro.coprod}
    \sum \colon \Fam(\cat C) \to \cat C
\end{equation} 
preserves \( \Phi \)-limits. 

Our key contribution is the observation that various flavors of infinitary
extensive categories are pseudoalgebras for such composites of pseudomonads.
More precisely, assuming that $\cat C$ is a category with coproducts:
\begin{itemize}[label=--]
    \item
        if \( \cat C \)  has pullbacks, and~\eqref{eq:intro.coprod} preserves
        them, then \( \cat C \) is \textit{infinitary extensive with pullbacks};
    \item
        if \( \cat C \) has finite limits, and~\eqref{eq:intro.coprod} preserves
        them, then \( \cat C \) is \textit{infinitary lextensive};
    \item
        if \( \cat C \)  has small limits, and~\eqref{eq:intro.coprod} preserves
        them, we say that the category $\cat C$ is \textit{doubly-infinitary
        lextensive}. We observe that $\cat C $ satisfies such properties if and
        only if $\cat C $ is simultaneously a \textit{doubly-infinitary
        distributive} category~\cite{LV24} as well as a \textit{lextensive}
        category. 
\end{itemize}

The observations presented above, coupled with the findings of \cite{LV23},
contribute to the understanding of extensive categories and distributive
categories through the prism of $2$-dimensional universal algebra, adding to
the comparison of these notions 
originally started in \cite{CLW93}. 

In \cite{LV24}, it was demonstrated that freely generated doubly-infinitary
distributive categories are cartesian closed. Furthermore, this investigation
extended to encompass the study of exponentials in freely generated infinitary
distributive categories. More generally, in \cite{LV24b}, a comprehensive
analysis was conducted, yielding general results concerning exponentiability
and cartesian closedness of Grothendieck constructions. Notably, these results
are applicable to a wide array of contexts, including freely generated
categorical structures.

Motivated by~\cite{LV23,LV24b}, we further study the exponentiable objects of
the free pseudoalgebras for the pseudomonads we considered; namely, we find
that:
\begin{itemize}[label=--]
  \item 
    in a freely generated infinitary lextensive category, objects with a
    finite number of connected components are exponentiable;
  \item 
    freely generated doubly-infinitary extensive categories  are
    \textit{cartesian closed}.
\end{itemize}

\paragraph*{Outline:} We revisit the notion of free \( \Phi \)-colimit
completions for a class \( \Phi \) of diagrams (small categories) in
Section~\ref{sect:free.comp}. Several authors have worked on free (co)limit
completions; namely, we have~\cite{GV72, Tho84, AR20} for ordinary categories,
and~\cite{Kel82, AK88} in the context of enriched category theory. We also
have the accounts~\cite{Koc95, Zob76, Mar97, PCW00} which study free \( \Phi
\)-(co)limit completions from the perspective of 2-dimensional monad
theory~\cite{BKP89, Luc16, Luc18b, Luc19}, which is the approach we employ, so
some familiarity with these methods is assumed. We focus specifically on four
classes of free (co)limit completions:

\begin{itemize}[label=--]
    \item
        the free \textit{coproduct} completion, denoted \( \Fam \),
    \item 
        the free \textit{finite limit} completion, denoted \( \Lfin \), 
    \item
        the free \textit{pullback} completion, denoted \( \Lpb \),
    \item 
        the free \textit{small limit} completion, denoted \( \Ll \).
\end{itemize}

In Section~\ref{sect:pseudomonad}, we study the distributivity of \( \Phi
\)-limits over coproducts. Similar work has been carried out in~\cite{ARV01,
MRW12, Gle18} and in the prequel~\cite{LV24}. After recalling the necessary
concepts pertaining to pseudodistributive laws~\cite{Mar99, Mar04, Wal19}, we
confirm that there is a pseudodistributive law between any free \( \Phi \)-limit
completion pseudomonad and the free coproduct completion pseudomonad \( \Fam \)
(Lemma~\ref{lem:fam.lifts}). Instantiating this result with each of the
aforementioned free limit completions, we obtain the composite pseudomonads \(
\Fam \circ \Lfin \), \( \Fam \circ \Lpb \), and \( \Fam \circ \Ll \).

The study of these pseudomonads and their pseudoalgebras have given us novel
characterizations of (infinitary) extensivity. More specifically, we prove that:
\begin{itemize}[label=--]
    \item
        \( (\Fam \circ \Lfin) \)-pseudoalgebras are precisely the lextensive
        categories (Theorem~\ref{thm:inf.lext}),
    \item
        \( (\Fam \circ \Lpb) \)-pseudoalgebras are precisely the extensive
        categories with pullbacks (Theorem~\ref{thm:inf.ext.pb}).
\end{itemize}

Moreover, in Section \ref{subsect:free.dbl.inf.lext}, we introduce the notion
of  \textit{doubly-infinitary lextensive categories}: these are the  \( (\Fam
\circ \Ll) \)-pseudoalgebras. Finally, we prove in
Theorem~\ref{thm:db.inf.lex} that doubly-infinitary lextensive categories
correspond to lextensive categories that are also doubly-infinitary
distributive as introduced in \cite{LV24}.

Mainly motivated by \cite{LV24, LV24b}, in the present work, our study
exponentiable objects in freely generated categorical structures is
the content of Section~\ref{sect:expn}. This includes our main results,
which respectively state that:
\begin{itemize}[label=--]
    \item
        freely generated doubly-infinitary lextensive categories are
        \textit{cartesian closed} (Theorem~\ref{thm:main.two}),
    \item 
        in freely generated infinitary lextensive categories, \textit{finite
        coproducts of connected objects} are exponentiable
        (Theorem~\ref{thm:main.one}). 
\end{itemize}

In Section \ref{sec:Examples}, we discuss examples of (doubly)-infinitary
lextensive categories.  Finally, in Section~\ref{sect:epilogue}, we show that
analogous results also hold for the \textit{free finite coproduct completion}
pseudomonad, leading to similar characterisations of (finitely) extensive
categories. Further, we discuss possible avenues for future work, descent
theoretical considerations of our findings, and we note a result on
non-canonical isomorphisms, as a direct consequence of the work
of~\cite{Luc19}.

\paragraph*{Acknowledgements} We acknowledge the community for the prompt
reactions to our work. In particular, we thank Robin Kaarsgaard for useful
queries, which brought forth the final topic of Section \ref{sect:epilogue}.
We are grateful to the anonymous referees for their informative reports.

The first author wishes to extend a special thanks to Tim Van der Linden for
his warm hospitality at Université catholique de Louvain during a brief stay
in May 2024, for the UCLouvain-ULB-VUB Category Theory Seminar.  The inspiring
environment and their kindness nurtured a much needed peace of mind, which
aided the advancement of this work.

\paragraph*{Funding.} This project has received funding via NWO Veni grant
number VI.Veni.201.124.

The first two named authors acknowledge partial financial support by 
\textit{Centro de Matemática da Universidade de Coimbra} (CMUC), funded by the
Portuguese Government through FCT/MCTES, DOI 10.54499/UIDB/00324/2020.

\section{Free colimit completions}
\label{sect:free.comp}

Let \( \CAT \) be the 2-category of locally small (\(\Set\)-enriched)
categories. Any other category considered in this work is assumed to be an
object of \( \CAT \). 

Let \( \Phi \) be a class of small categories.  We say that a category \( \cat C \)
has \( \Phi \)-colimits if any functor \( D \colon \cat J \to \cat C \) with
\( \cat J \in \Phi \) has a colimit in \( \cat C \).  Moreover, if \( F \colon
\cat C \to \cat D \) is a functor between categories with \( \Phi \)-colimits, we
have a morphism
\begin{equation}
    \label{eq:colim.comp}
    \colim FD \to F(\colim D) 
\end{equation}
which is natural in \( D \colon \cat J \to \cat C\) for \( \cat J \in \Phi \). We
say that \( F \) preserves \( \Phi \)-colimits if \eqref{eq:colim.comp} is a
natural isomorphism.

We let \( \PhiColim \) be the 2-category of categories with \( \Phi \)-colimits,
\( \Phi \)-colimit preserving functors and natural transformations. We have
a forgetful 2-functor
\begin{equation}
    \label{eq:phicol.forget}
    \begin{tikzcd}
        \PhiColim \ar[r] & \CAT 
    \end{tikzcd}
\end{equation}
which is pseudomonadic -- we let \( \Pp_\Phi \) be the left biadjoint to
\eqref{eq:phicol.forget}, as well as the induced pseudomonad by the biadjunction
-- the \textit{free \( \Phi \)-colimit completion} pseudomonad. We can justify
this abuse of notation, by noting that a category \( \cat C \) has \( \Phi
\)-colimits if and only if the (fully faithful) unit of \( \Pp_\Phi \) at \(
\cat C \), denoted by \( \yy \colon \cat C \to \Pp_\Phi(\cat C) \), has a left
adjoint \cite{AK88}. Thus, being a \( \Pp_\Phi \)-pseudoalgebra is a
\textit{property} of the category \( \cat C \), as opposed to structure
\cite{KL97}. In other words, \( \Pp_\Phi \) is a \textit{lax idempotent
pseudomonad} \cite{Koc95,Mar97,PCW00, LUCCLE, Lurdes2024} (also known as
\textit{Kock-Zöberlein pseudomonad}), and, hence, a property-like
pseudomonad~\cite{KL97, Luc18a}. 

Dually, we say that a category \( \cat C \) has \( \Phi \)-limits whenever \(
\cat C^\op \) has \( \Phi \)-colimits, and we say that a functor \( F \colon \cat C \to
\cat D \) between categories with \( \Phi \)-limits preserves \( \Phi \)-limits
if \( F^\op \colon \cat C^\op \to \cat D^\op\) preserves \( \Phi \)-colimits. We
denote by \( \PhiLim \) the 2-category of categories with \( \Phi \)-limits, \(
\Phi \)-limit preserving functors and natural transformations.
We also have a
pseudomonadic 2-functor
\begin{equation}
    \label{eq:philim.forget}
    \begin{tikzcd}
        \PhiLim \ar[r] & \CAT
    \end{tikzcd}
\end{equation}
whose left biadjoint and induced pseudomonad are denoted by \( \Ll_\Phi \), so
that we have a biequivalence \( \Ll_\Phi \dash \PsAlg \eqv \PhiLim \). In fact,
we note that \(\Ll_\Phi(\cat C) = \Pp_\Phi(\cat C^\op)^\op \). We likewise
denote the (fully faithful) unit at a category \( \cat C \) by \( \yy \colon
\cat C \to \Ll_\Phi(\cat C) \). This unit has a right adjoint if and only if
\(\cat C\) has $\Phi$-limits.

\begin{remark}
    In \cite{Kel82, AK88}, the notions of \( \Phi \)-colimits and \( \Phi
    \)-colimit completions were worked out in the more general setting of
    enriched category theory, where \( \Phi \) is taken to be a class of small
    weights instead (that is, functors \( \cat J ^\op \to \cat V \) with \( \cat
    J \)   small), where \( \cat V \) is the base monoidal category. 

    In our setting, the notions we provided correspond to the classes \( \Phi \)
    of weights that are constant functors to the terminal object. We leave the
    consideration of our results in an enriched setting for future work. 
\end{remark}

As argued in \cite{Kel82, AK88}, the free \( \Phi \)-colimit completion
\(\Pp_\Phi(\cat C)\) of a category \( \cat C \) is most succinctly described as
the smallest full subcategory of \( \CAT(\cat C^\op, \Set) \) that has \( \Phi
\)-colimits. Dually, \(\Ll_\Phi(\cat C)\) is the smallest full subcategory of \(
\CAT(\cat C, \Set)^\op \) that has \( \Phi \)-limits. With this, we can obtain
an expression for the hom-sets of \( \Phi \)-(co)limit completions:

\begin{lemma}
    \label{lem:phi.irreducible}
    Let \( \Phi \) be a class of small categories, let \( C \) be an object
    of \(\cat C \), and let \( E \colon \cat K \to \Pp_\Phi(\cat C) \) be a
    diagram with \( \cat K \in \Phi \). We have a natural isomorphism
    \begin{equation}
        \label{eq:phi.irreducible}
        \Pp_\Phi(\cat C)\big(\yy(C),\colim_{k\in \cat K} Ek\big) 
            \iso \colim_{k \in \cat K} \Pp_\Phi(\cat C)\big(\yy(C),Ek\big),
    \end{equation}
    and dually, for a diagram \( F \colon \cat K \to \Ll_\Phi(\cat C) \),
    \begin{equation}
        \label{eq:phi.coirreducible}
        \Ll_\Phi(\cat C)\big(\lim_{k\in \cat K} Fk,\yy(C)\big) 
            \iso \colim_{k \in \cat K} \Ll_\Phi(\cat C)\big(Fk,\yy(C)\big).
    \end{equation}
\end{lemma}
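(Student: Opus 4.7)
The plan is to exploit the concrete description, noted right before the lemma, of $\Pp_\Phi(\cat C)$ as the smallest full subcategory of $\CAT(\cat C^\op,\Set)$ containing the representables and closed under $\Phi$-colimits. Let $J \colon \Pp_\Phi(\cat C) \hookrightarrow \CAT(\cat C^\op,\Set)$ denote the inclusion. By construction, $J$ is fully faithful, $J \circ \yy$ is the usual Yoneda embedding, and $J$ preserves $\Phi$-colimits. This is the only nontrivial input I need beyond basic facts about presheaves.

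First, I would apply $J$ to rewrite both sides. On the left, $J$ being fully faithful and sending $\colim_k Ek$ to $\colim_k J(Ek)$ gives
\begin{equation*}
    \Pp_\Phi(\cat C)\bigl(\yy(C),\colim_{k\in \cat K} Ek\bigr)
        \iso \CAT(\cat C^\op,\Set)\bigl(\yy(C),\colim_{k\in \cat K} J(Ek)\bigr).
\end{equation*}
The Yoneda lemma identifies the right-hand side with $\bigl(\colim_{k} J(Ek)\bigr)(C)$. Since colimits in $\CAT(\cat C^\op,\Set)$ are computed pointwise, evaluation at $C$ preserves them, giving
\begin{equation*}
    \bigl(\colim_{k} J(Ek)\bigr)(C) \iso \colim_{k\in \cat K}\, J(Ek)(C)
        \iso \colim_{k\in \cat K}\, \CAT(\cat C^\op,\Set)(\yy(C), J(Ek)),
\end{equation*}
again by Yoneda. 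A final application of full faithfulness of $J$ rewrites each term as $\Pp_\Phi(\cat C)(\yy(C), Ek)$, yielding the isomorphism~\eqref{eq:phi.irreducible}. Naturality in $E$ is automatic since every step is natural.

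For the dual statement~\eqref{eq:phi.coirreducible}, I would invoke the identification $\Ll_\Phi(\cat C) = \Pp_\Phi(\cat C^\op)^\op$ recorded just before the lemma: a diagram $F \colon \cat K \to \Ll_\Phi(\cat C)$ is the same data as a diagram $\cat K^\op \to \Pp_\Phi(\cat C^\op)$, and $\lim_k Fk$ in $\Ll_\Phi(\cat C)$ corresponds to $\colim_k$ in $\Pp_\Phi(\cat C^\op)$. Applying~\eqref{eq:phi.irreducible} to $\cat C^\op$ and reversing arrows delivers~\eqref{eq:phi.coirreducible}. One has to be a little careful that $\cat K \in \Phi$ vs.\ $\cat K^\op \in \Phi$ is not an issue: the right setup is that $\Phi$-limits in $\cat D$ are by definition $\Phi$-colimits in $\cat D^\op$, so the class $\Phi$ need not be self-dual; the diagram shape appearing on the colimit side of the duality is exactly the $\cat K \in \Phi$ from the hypothesis.

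The main obstacle is simply making sure the embedding $J$ indeed preserves the $\Phi$-colimits that are computed inside $\Pp_\Phi(\cat C)$, i.e.\ that $\Pp_\Phi(\cat C)$ is closed under $\Phi$-colimits taken in the presheaf category. This is precisely the content of the concrete description recalled in the paragraph preceding the lemma, so once that is granted the argument reduces to two applications of the Yoneda lemma and the pointwise nature of colimits in $\CAT(\cat C^\op,\Set)$.
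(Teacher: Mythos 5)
Your proposal is correct and follows essentially the same route as the paper: identify the hom-set inside $\CAT(\cat C^\op,\Set)$ via the full inclusion (using that $\Pp_\Phi(\cat C)$ is closed under $\Phi$-colimits computed there), apply Yoneda, use pointwise computation of colimits, apply Yoneda again, and conclude by full faithfulness, with the dual statement obtained from $\Ll_\Phi(\cat C)=\Pp_\Phi(\cat C^\op)^\op$. Your explicit attention to the preservation of $\Phi$-colimits by the inclusion and to the $\cat K$ versus $\cat K^\op$ bookkeeping only makes explicit what the paper leaves implicit.
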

\begin{proof}
    We have
    \begin{align*}
        \Pp_\Phi(\cat C)&\big(\yy(C),\colim_{k\in \cat K} Ek\big) \\
        &\iso \CAT (\cat C^\op,\Set)\big(\cat C(-,C),
                                        \colim_{k \in \cat K} Ek\big)
        \\
        &\iso \big(\colim_{k \in \cat K} Ek\big)(C)
        &\text{Yoneda lemma,} \\
        &\iso \colim_{k \in \cat K} \big((Ek)C\big)
        &\text{componentwise colimits,} \\
        &\iso \colim_{k \in \cat K} \CAT(\cat C^\op,\Set)\big(\cat C(-,C),Ek\big)
        &\text{Yoneda lemma,} \\
        &\iso \colim_{k \in \cat K} \Pp_\Phi(\cat C)\big(\yy(C),Ek\big).
    \end{align*}
\end{proof}

This leads to the following formulas for the sets of morphisms (hom-sets), based on the
observation that representable functors preserve limits.
\begin{corollary}
    \label{cor:formula}
    Let \( \Phi \) be a class of small categories. If \( \cat J, \cat K \in
    \Phi\), and \( F \colon \cat J \to \cat C \), \( G \colon \cat K \to
    \Pp_\Phi(\cat C)\), then
    \begin{equation}
        \label{eq:main.formula.col}
        \Pp_\Phi(\cat C)\big(\colim_{j\in \cat J} Fj, 
                                \colim_{k \in \cat K} Gk\big)
            \iso \lim_{j \in \cat J} 
                 \colim_{k \in \cat K} \Pp_\Phi(\cat C)(Fj,Gk),
    \end{equation}
    and dually, if \(H:\cat K\to \Ll_\Phi(\cat C)\), then
    \begin{equation}
        \label{eq:main.formula.lim}
        \Ll_\Phi(\cat C)\big(\lim_{k \in \cat K}Hk,\lim_{j\in \cat J}Fj\big)
        \iso \lim_{j\in \cat J} \colim_{k\in \cat K} \Ll_\Phi(\cat C)(Hk,Fj),
    \end{equation}
    where we identify
    an object of \(\cat C\) with its image in \( \Pp_\Phi(\cat C) \) and \(
    \Ll_\Phi(\cat C) \).
\end{corollary}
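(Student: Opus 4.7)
My plan is to derive the formula from Lemma~\ref{lem:phi.irreducible} by first reducing the left-hand side along the first variable using the standard fact that any contravariant representable $\Pp_\Phi(\cat C)(-, Y) \colon \Pp_\Phi(\cat C)^\op \to \Set$ preserves limits, or equivalently, sends colimits in $\Pp_\Phi(\cat C)$ to limits in $\Set$. Applying this to the diagram $F \colon \cat J \to \cat C \hookrightarrow \Pp_\Phi(\cat C)$ (with $Y = \colim_{k \in \cat K} Gk$) yields
\[
    \Pp_\Phi(\cat C)\big(\colim_{j\in \cat J} Fj, \colim_{k \in \cat K} Gk\big)
    \iso \lim_{j \in \cat J} \Pp_\Phi(\cat C)\big(Fj, \colim_{k \in \cat K} Gk\big).
\]

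Next, I would fix $j \in \cat J$ and observe that $Fj$, viewed in $\Pp_\Phi(\cat C)$, is of the form $\yy(Fj)$, so that Lemma~\ref{lem:phi.irreducible} (applied to the diagram $G$) gives a natural isomorphism
\[
    \Pp_\Phi(\cat C)\big(Fj, \colim_{k \in \cat K} Gk\big)
    \iso \colim_{k \in \cat K} \Pp_\Phi(\cat C)(Fj, Gk).
\]
Composing the two isomorphisms (and using the fact that taking the limit over $\cat J$ is functorial in its argument so that we may pass the second isomorphism under $\lim_{j \in \cat J}$) produces \eqref{eq:main.formula.col}.

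For the dual formula \eqref{eq:main.formula.lim}, I would either mirror the argument verbatim, using the second half of Lemma~\ref{lem:phi.irreducible} together with the observation that the covariant representable $\Ll_\Phi(\cat C)(X,-)$ preserves limits in the second slot, or transport the first half of the corollary along the isomorphism $\Ll_\Phi(\cat C) = \Pp_\Phi(\cat C^\op)^\op$, which interchanges the roles of the two variables and of limits and colimits.

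The entire argument is essentially bookkeeping: the nontrivial ingredient is Lemma~\ref{lem:phi.irreducible}, which makes representables behave as if they were ``irreducible'' with respect to $\Phi$-colimits. I do not anticipate any substantive obstacle; the only point requiring mild care is the naturality of the resulting composite in both $F$ and $G$, which however follows immediately from the naturality of each constituent isomorphism (the Yoneda-type identification in Lemma~\ref{lem:phi.irreducible} and the universal property of colimits of representables).
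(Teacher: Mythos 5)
Your proposal is correct and matches the paper's (implicit) argument exactly: the paper derives the corollary from Lemma~\ref{lem:phi.irreducible} together with the observation that representable functors preserve limits, which is precisely your two-step reduction of the left-hand side, first in the contravariant variable and then via the lemma in the covariant one.
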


Alternatively, one may constuct \( \Pp_\Phi(\cat C) \) , and, dually, \(
\Ll_\Phi(\cat C) \), via transfinite induction \cite{Kel82, AK88}, by iteratively
adjoining (co)limits of diagrams with domain in \( \Phi \), and taking unions at
limit ordinals. In certain important cases, such as those of small (or finite)
(co)limit or (co)product completions (see below), the induction stabilises after
only one step. 

Therefore, if \( \Phi \) is a class of small categories such that the
transfinite construction converges in one step, every object in \( \Pp_\Phi(\cat
C)\) is obtained as the \(\Phi\)-colimit of a diagram in \(\cat C\), from which
we obtain the  following characterisation of the \( \Phi \)-(co)limit completion
of \( \cat C \); \( \Pp_\Phi(\cat C) \) consists of
\begin{itemize}[label=--]
    \item 
        diagrams \( F \colon \cat J \to \cat C\) with \( \cat J \in \Phi \) as
        objects, 
    \item 
        hom-sets given by the formula\footnote{See \cite[Section~1]{Tho84},
        and compare with \eqref{eq:main.formula.col}.}
        \begin{equation}
            \label{eq:free.colim.form}
            \Pp_\Phi(\cat C)(F,G) = \lim_{j \in \cat J} 
                             \colim_{k \in \cat K} \cat C(Fj,Gk)
        \end{equation}
        for diagrams \( F \colon \cat J \to \cat C \), \( G \colon \cat K \to
        \cat C \) with \( \cat J, \cat K \in \Phi \).
\end{itemize}

Dually, in case every object in \( \Ll_\Phi(\cat C)\) is obtained as the
\(\Phi\)-limit of a diagram in \(\cat C\),  the free limit completion of a
category \( \cat C \) is given by \( \Ll_\Phi(\cat C) = \Pp_\Phi(\cat C^\op)^\op
\). Explicitly, it consists of
\begin{itemize}[label=--]
    \item 
        diagrams \( F \colon \cat J \to \cat C\) with \( \cat J \in \Phi \) as
        objects,
    \item
        hom-sets given by the formula
        \begin{equation}
            \label{eq:free.lim.form}
            \Ll_\Phi(\cat C)(F,G) = \lim_{k \in \cat K} 
                             \colim_{j \in \cat J} \cat C(Fj,Gk)
        \end{equation}
        for diagrams \( F \colon \cat J \to \cat C \), \( G \colon \cat K \to
        \cat C \) with \( \cat J, \cat K \in \Phi \).
\end{itemize}

Such a characterisation is appropriate, for example, when $\Phi$ consists of the
class of all small (resp. finite) discrete categories, yielding small (resp.
finite) coproduct and product completions, or if $\Phi$ consists of the class of
all small (resp. finite) categories, yielding small (resp. finite) colimit and
limit completions. 

\paragraph*{Free coproduct completion:}
    If \( \Phi \) is the class of discrete small categories (sets), then \(
    \PhiColim \) is the 2-category of categories with coproducts, 
    coproduct-preserving functors and all natural transformations. In this case, we write \( \Fam =
    \Pp_\Phi \). 

    We can explicitly describe the objects of \( \Fam(\cat C) \) -- these are
    given by set-indexed families of objects \( (X_i)_{i \in I} \), with \( X_i
    \in \cat C \). Using the representation coming out of Corollary
    \ref{cor:formula}, we can also describe the hom-sets of morphisms from
    \((X_i)_{i \in I} \) to \((Y_j)_{j \in J} \) as 
    \begin{equation*}
        \prod_{i\in I}\sum_{j\in J} \cat C(X_i, Y_j).
    \end{equation*}
    There is a wealth of literature studying free coproduct completions and
    their properties. For instance, we refer the reader to \cite{CLW93, AR20}, \cite[Chapter~6]{BJ01}, \cite[Section~7]{PL23b}, and \cite{LV24b}.

\paragraph*{Free (co)limit completion:}
    When \( \Phi \) consists of all small categories, \( \PhiColim \) is the
    2-category of categories with small colimits and small-colimit preserving
    functors. 
    
    Given a category \( \cat C \), its \textit{free colimit completion} \(
    \Pp(\cat C) \) is the full subcategory of \( \CAT(\cat C^\op,\Set) \)
    consisting of the \textit{essentially small} or \textit{accessible} functors
    \cite{Kel82}. When \( \cat C \) is itself essentially small, we have \(
    \Pp(\cat C) \eqv \CAT(\cat C^\op,\Set) \).
    
    Alternatively, as noted above, we can characterise \(\Pp(\cat C)\) as the
    category with diagrams \(F \colon \cat J\to \cat C\) with \( \cat J \) small
    as objects and homsets
    \begin{equation*}
        \Pp(\cat C)(F, G)
            = \lim_{j\in \cat J} \colim_{k\in \cat K}\cat C(Fj, Gk)
    \end{equation*}
    for diagrams \( F \colon \cat J \to \cat C \), \( G \colon \cat K \to
    \cat C \) with \( \cat J, \cat K \) small.

\paragraph*{Free finite limit completion:}
    We consider the class \( \Phi = \fin \) of all finite categories, in which
    case \( \PhiLim \) is the 2-category of categories with finite limits and
    the functors that preserve them. We denote the free finite limit completion
    pseudomonad by \( \Lfin \). 

    For any given category \(\cat C\), the category \( \Lfin(\cat C) \) also
    admits a description as a category of diagrams, similar to \( \Ll(\cat C) \).

\paragraph*{Free pullback completion:}
    We consider the class \( \Phi = \pb \) consisting of a single element,
    the cospan category: \( \cdot \rightarrow \cdot \leftarrow \cdot \)

    The 2-category \( \PhiLim \) is the 2-category of categories with pullbacks
    and pullback preserving functors between them, and we denote the free
    pullback completion pseudomonad by \( \Lpb \). 

    Unlike previous examples, not every object in \( \Lpb(\cat C) \) can be
    obtained by taking the pullback of a diagram in \( \Lpb(\cat C) \) of objects
    in the essential image of \( \yy \colon \cat C \to \Lpb(\cat C) \), so we
    cannot recover any formulae analogous to \eqref{eq:free.lim.form}; we refer
    the interested reader to \cite[Section 7]{AK88} for further details.

\section{Three pseudomonads}
\label{sect:pseudomonad}

Let \( \Tt \) be a pseudomonad on \( \CAT \). We consider the following instance
of the main result from \cite{Wal19}:

\begin{lemma}
    The following are equivalent:
    \begin{enumerate}[label=\textnormal{(\roman*)}]
        \item 
            \( \Fam \) lifts to a (lax idempotent) pseudomonad \( \FamT \) on \(
            \TPsAlg \).
        \item  
            There exists a pseudodistributive law \( \delta \colon \Tt \circ
            \Fam \to \Fam \circ \Tt \).
    \end{enumerate}
\end{lemma}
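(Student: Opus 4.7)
The plan is to view the statement as a direct instance of the main theorem of \cite{Wal19}, which establishes, for a lax idempotent pseudomonad $R$ and an arbitrary pseudomonad $S$ on the same $2$-category, a biequivalence between pseudodistributive laws $S \circ R \to R \circ S$ and liftings of $R$ to a lax idempotent pseudomonad on $S$-pseudoalgebras. Setting $S = \Tt$ and $R = \Fam$, this gives precisely our equivalence, so the only hypothesis that requires verification is the lax idempotence of $\Fam$. This is the special case $\Phi = $ small discrete categories of the general fact, recalled in Section~\ref{sect:free.comp}, that each $\Pp_\Phi$ is a lax idempotent (property-like) pseudomonad on $\CAT$.

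For the direction (ii) $\Rightarrow$ (i), given a pseudodistributive law $\delta \colon \Tt \circ \Fam \to \Fam \circ \Tt$, the lifting $\FamT$ is defined on objects by sending a pseudoalgebra $(A, a \colon \Tt A \to A)$ to the category $\Fam A$ equipped with the $\Tt$-action $\Fam(a) \circ \delta_A$. Coherence of $\delta$ with the unit and multiplication of $\Tt$ translates into the pseudoalgebra axioms for this action, while coherence of $\delta$ with the unit $\yy$ and the sum $\sum$ of $\Fam$ equips the pseudomonad data of $\Fam$ with the pseudomorphism structure needed to lift it to $\FamT$. Lax idempotence of $\FamT$ is then inherited from that of $\Fam$, since the adjoints witnessing it can be transported along $U \colon \TPsAlg \to \CAT$.

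For the converse (i) $\Rightarrow$ (ii), a lifting $\FamT$ satisfies $U \circ \FamT \iso \Fam \circ U$, and evaluating the lifted pseudoalgebra structure at the free $\Tt$-pseudoalgebra $(\Tt X, \mu_X)$ produces a pseudomap $a_X \colon \Tt \Fam \Tt X \to \Fam \Tt X$. Precomposing with $\Tt \Fam$ applied to the unit $\eta_X \colon X \to \Tt X$ yields the component $\delta_X = a_X \circ \Tt \Fam \eta_X$, and one verifies that the pseudomonad axioms of $\FamT$, together with the free algebra universal property, translate back into the compatibility of $\delta$ with the units, multiplications, and coherence $2$-cells of both $\Tt$ and $\Fam$. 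Equivalently, $\delta$ can be described as the mate, in the pseudo sense, of the identity isomorphism $U \circ \FamT \iso \Fam \circ U$.

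The main obstacle is the coherence bookkeeping intrinsic to the pseudo setting, where every naturality, unit, and associativity square holds only up to canonical invertible $2$-cells that must themselves be compatible. Walker's approach sidesteps the bulk of these verifications by leveraging the property-like nature of $\Fam$-pseudoalgebras: once the underlying $1$-cell data is specified, the required coherence $2$-cells are forced to be unique, and the correspondence between liftings and distributive laws then reduces to unfolding definitions. Consequently, the substantive task beyond invoking \cite{Wal19} is confirming its hypotheses hold in our setting, which, as noted at the outset, they do.
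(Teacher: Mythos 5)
Your proposal is correct and takes essentially the same route as the paper: the paper's entire proof is the observation that $\Fam$ is lax idempotent (citing Kock), so that \cite[Theorem 35]{Wal19} applies directly with $\Pp = \Fam$ and $\Tt$ as the other pseudomonad. The additional paragraphs you give, unpacking how the lifting and the distributive law are extracted from one another, are a correct sketch of what Walker's theorem encapsulates, but the paper does not reproduce them.
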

\begin{proof}
    Since \( \Fam \) is a lax idempotent pseudomonad \cite{Koc95}, we may
    instantiate \cite[Theorem 35]{Wal19} with \( \Pp = \Fam \).
\end{proof}

In the presence of a pseudodistributive law \( \delta \colon \Tt \circ \Fam \to
\Fam \circ \Tt \), the composite \( \Fam \circ \Tt \) also has the structure of
a pseudomonad on \( \CAT \) \cite[Section~5]{Mar99}. We also recall the
following result from \cite[Section~6]{Mar04}:

\begin{lemma}
    \label{lem:bieq}
    We have a biequivalence \( \FamT\dash\PsAlg \eqv (\Fam \circ \Tt) \dash
    \PsAlg \).
\end{lemma}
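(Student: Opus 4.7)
The plan is to invoke the general theorem on pseudodistributive laws, namely that the pseudoalgebras for a composite pseudomonad arising from a pseudodistributive law biequivalently correspond to pseudoalgebras for the induced lift. Since this is essentially a direct appeal to \cite[Section~6]{Mar04}, applied here with $\Pp = \Fam$ and the pseudodistributive law $\delta \colon \Tt \circ \Fam \to \Fam \circ \Tt$, the bulk of the work has already been done upstream; my task reduces to describing the explicit comparison $2$-functors in both directions and indicating why they form a biequivalence.

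In one direction, given a $(\Fam \circ \Tt)$-pseudoalgebra structure $a \colon \Fam(\Tt(A)) \to A$ on an object $A$ of $\CAT$, I would restrict $a$ along the unit $\uu_{\Tt(A)} \colon \Tt(A) \to \Fam(\Tt(A))$ of $\Fam$ to obtain a morphism $\tau \colon \Tt(A) \to A$; the $(\Fam \circ \Tt)$-pseudoalgebra coherences force $(A, \tau)$ to be a $\Tt$-pseudoalgebra, and the remaining data packaged by $a$, together with the coherence cell mediated by $\delta$, equips $(A, \tau)$ with a $\Fam$-pseudoalgebra structure inside $\TPsAlg$, i.e., a $\FamT$-pseudoalgebra structure. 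In the other direction, given a $\FamT$-pseudoalgebra, one has an underlying $\Tt$-pseudoalgebra $(A, \tau)$ together with a $\Fam$-pseudoalgebra structure $\alpha \colon \Fam(A) \to A$ in $\TPsAlg$, and the composite $\alpha \circ \Fam(\tau) \colon \Fam(\Tt(A)) \to A$, equipped with the coherence cells induced by $\delta$, is the desired $(\Fam \circ \Tt)$-pseudoalgebra structure on $A$.

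The main technical burden, which I would delegate largely to \cite{Mar04}, is bookkeeping: checking that both constructions extend coherently to pseudomorphisms and algebra $2$-cells, and that the two resulting $2$-functors are mutually quasi-inverse up to pseudonatural equivalence. The principal simplification, and what makes the argument tractable, is the lax idempotence of $\Fam$ established in \cite{Koc95}: being a $\FamT$-pseudoalgebra is essentially a \emph{property} of a $\Tt$-pseudoalgebra once $\delta$ is fixed, so the coherence cells in the $\Fam$-direction are uniquely determined by the universal property of the relevant units, which trims away most of the data that would otherwise need to be tracked and compared.
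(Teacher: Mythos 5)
Your proposal is correct and matches the paper's treatment: the paper offers no proof of its own and simply recalls this biequivalence from \cite[Section~6]{Mar04}, exactly the reference you delegate to. Your added sketch of the two comparison constructions (restricting a $(\Fam\circ\Tt)$-action along the units to extract the $\Tt$- and $\Fam$-structures, and composing $\alpha\circ\Fam(\tau)$ in the other direction) is the standard argument and is consistent with what that reference establishes.
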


In \cite{Mar04} we also find a description of the \( \FamT \)-pseudoalgebras;
they are the categories \( \cat C \) together with
\begin{itemize}[label=--]
    \item
        a \( \Tt \)-pseudoalgebra structure \( \Lambda \colon \Tt(\cat C) \to
        \cat C \) on \( \cat C \),
    \item
        a \( \Fam \)-pseudoalgebra structure \( \sum \colon \Fam(\cat C) \to
        \cat C \) on \( \cat C \) -- in other words, \( \cat C \) is a category
        with coproducts, 
    \item
        The coproduct functor \( \sum \colon \Fam(\cat C) \to \cat C \) lifts to a
        \(\Tt\)-pseudomorphism.
\end{itemize}

Moreover, a \( \FamT \)-pseudomorphism \( F \colon \cat C \to \cat D \) is a
functor \(F\) that preserves coproducts and is a \(\Tt\)-pseudomorphism in a
compatible way (up to natural isomorphism).

Our work focuses on pseudomonads \( \Tt \) that are free \( \Phi \)-limit
completions for a class \( \Phi \) of small categories.  For simplicity, we
introduce the following terminology: 

\begin{definition}
    For a class \( \Phi \) of small categories, we say that the \( (\Fam \circ
    \Ll_\Phi) \)-pseudoalgebras are the \textit{\( \Phi \)-coproduct
    distributive categories}.
\end{definition}

In this setting, we have
the following result.

\begin{lemma}
    \label{lem:fam.lifts}
    For a class \( \Phi \) of small categories, \( \Fam \) lifts to a
    pseudomonad \( \Fam_{\Ll_\Phi} \) on \( \Phi \dash \Lim \). Consequently, $
    \Fam_{\Ll_\Phi} \dash \PsAlg$ is biequivalent to $(\Fam \circ \Ll_\Phi)
    \dash \PsAlg$.
\end{lemma}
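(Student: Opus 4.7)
The plan is to apply the first lemma of this section to \( \Tt = \Ll_\Phi \), reducing the first statement to the construction of a pseudodistributive law \( \delta \colon \Ll_\Phi \circ \Fam \to \Fam \circ \Ll_\Phi \); the ``consequently'' clause then follows immediately from Lemma~\ref{lem:bieq}.

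To construct \( \delta \), I would exploit that both pseudomonads involved are property-like (\( \Fam \) lax-idempotent, and \( \Ll_\Phi \) colax-idempotent, its unit admitting a right adjoint precisely when the target has \( \Phi \)-limits): by the theory developed in \cite{Mar99, Mar04, Wal19}, such a distributive law exists and is essentially unique provided that \( \Fam(\cat C) \) carries a compatible \( \Ll_\Phi \)-pseudoalgebra structure whenever \( \cat C \) does, pseudofunctorially in coproduct-preserving morphisms. Concretely, I would verify that if \( \cat C \) has \( \Phi \)-limits, then so does \( \Fam(\cat C) \): given \( D \colon \cat J \to \Fam(\cat C) \) with \( \cat J \in \Phi \), say \( D(j) = (X^j_i)_{i \in I_j} \), the description of morphisms in \( \Fam(\cat C) \) as pairs of an indexing function and a family of morphisms of \( \cat C \) reveals that a cone over \( D \) from a family \( (Y_k)_{k \in K} \) amounts to (i) a cone over \( j \mapsto I_j \) in \( \Set \) with vertex \( K \), together with (ii) for each element \( \sigma \) of the \( \Set \)-limit of \( j \mapsto I_j \), a cone in \( \cat C \) over \( j \mapsto X^j_{\sigma(j)} \). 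This identifies
\[
    \lim_{j \in \cat J} D(j) \iso \Bigl( \lim_{j \in \cat J} X^j_{\sigma(j)} \Bigr)_{\sigma \in \lim_{j \in \cat J} I_j},
\]
with the inner limits taken in \( \cat C \) and the outer in \( \Set \); preservation of these limits by \( \Fam(F) \) for any \( \Phi \)-limit and coproduct-preserving \( F \) is immediate from the same formula.

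Equivalently, one may obtain \( \delta_{\cat C} \) directly via the universal property of \( \Ll_\Phi \): the functor \( \Fam(\yy_{\cat C}) \colon \Fam(\cat C) \to \Fam(\Ll_\Phi(\cat C)) \) lands in a category with \( \Phi \)-limits (by the preceding paragraph, applied to \( \Ll_\Phi(\cat C) \)), so it extends essentially uniquely to a \( \Phi \)-limit preserving functor \( \delta_{\cat C} \colon \Ll_\Phi(\Fam(\cat C)) \to \Fam(\Ll_\Phi(\cat C)) \). Pseudonaturality of \( \delta \) and the coherence axioms of a pseudodistributive law are then forced by uniqueness of such extensions along the fully faithful units, as is standard for distributive laws between property-like pseudomonads.

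The main obstacle, I expect, is not the construction of \( \delta \) itself — the explicit formula makes this essentially routine — but rather packaging its 2-dimensional coherence cleanly. This, however, is a standard consequence of the property-like machinery of \cite{Mar99,Mar04,Wal19} and should require no delicate verification beyond invoking it.
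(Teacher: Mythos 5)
Your argument is correct in substance but routes through the opposite side of Walker's equivalence from the paper. The paper proves the lift directly: it cites Gray for the fact that \( \Famm{\cat C} \) has whatever \( \Phi \)-limits \( \cat C \) has and that \( \Fam(F) \) preserves them when \( F \) does (which is exactly the content of your explicit formula \( \lim_j D(j) \iso \bigl( \lim_j X^j_{\sigma(j)} \bigr)_{\sigma \in \lim_j I_j} \) — your computation is a correct expansion of that citation), notes that the unit \( \yy \) preserves \( \Phi \)-limits, and then disposes of the multiplication in one line via the fully faithful adjoint string \( \Fam\cdot\yy \adj \mathfrak{m} \adj \yy\cdot\Fam \): being a right adjoint, \( \mathfrak{m} \) preserves all limits. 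Since \( \PhiLim \to \CAT \) is locally fully faithful, this data already constitutes the lift, and the distributive law is then a \emph{consequence} via the first lemma of the section. You instead take the lift as the target, reduce it to constructing \( \delta \colon \Ll_\Phi \circ \Fam \to \Fam \circ \Ll_\Phi \), build its components by extending \( \Fam(\yy_{\cat C}) \) along the unit of \( \Ll_\Phi \), and defer all coherence to the property-like machinery of Marmolejo and Walker. This works, but it is the longer way around: the cleanest proof that the coherence of \( \delta \) is automatic is precisely Walker's correspondence with liftings, so your route essentially passes through the paper's argument anyway, while leaving the compatibility with the multiplication of \( \Fam \) (which the paper settles with the adjoint-string observation) implicit inside the deferred coherence check. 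Two small imprecisions worth fixing: the lift requires \( \Fam(F) \) to preserve \( \Phi \)-limits for \( F \) merely \( \Phi \)-limit preserving (coproduct preservation of \( F \) is not the relevant hypothesis there), and you should make explicit that \( \Famm{\Ll_\Phi(\cat C)} \) has \( \Phi \)-limits before invoking the universal property of \( \Ll_\Phi(\Famm{\cat C}) \) — though that does follow from your own formula applied to \( \Ll_\Phi(\cat C) \).
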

\begin{proof}
    Since \( \Fam(\cat C) \) has whichever \( \Phi \)-limits that \( \cat C \)
    has and \(\Fam(F)\) is $\Phi$-limit preserving whenever $F$ is
    \cite[Section~4]{Gra66}, we conclude that \( \Fam \) lifts to an
    endo-2-functor on \( \Phi \dash \Lim \), and \( \yy \colon \cat C \to
    \Fam(\cat C) \) preserves \( \Phi \)-limits. Moreover, since we have a fully
    faithful adjoint string
    \begin{equation*}
        \Fam \cdot \yy \adj \mathfrak{m} \adj \yy \cdot \Fam,
    \end{equation*}
    we note that, in particular, \( \mathfrak{m} \) is a right adjoint, and
    therefore preserves \( \Phi \)-limits.
\end{proof}

In \cite{LV24}, we study the pseudodistributive laws of the free product
completion pseudomonad \( \Ll_\Set = \Fam\big((-)^\op\big)^\op \) and the free
finite product completion pseudomonad \( \Ll_\finSet =
\FinFam\big((-)^\op\big)^\op \) over \( \Fam \), taking \( \Set \) (\( \finSet
\)) to be the class of small (finite), discrete categories. The composite
pseudomonads \( \Dist = \Fam \circ \Ll_\Set \) and \( \Fam \circ \Ll_\finSet \)
are the pseudomonads whose pseudoalgebras are the doubly-infinitary distributive
categories and infinitary distributive categories, respectively. Under the
terminology we introduced, these are the \textit{product-coproduct distributive}
categories and the \textit{finite product-coproduct distributive} categories. In
the current work, we shall see that:
\begin{itemize}
    \item 
        $\Phi$-coproduct distributive categories are infinitary lextensive
        categories, for the class $\Phi$ of finite categories (which corresponds
        to distributivity of finite limits over coproducts); 
    \item
        $\Phi$-coproduct distributive categories are the infinitary extensive
        categories with pullbacks, for the singleton class $\Phi$ consisting of
        the cospan category \( \cdot \rightarrow \cdot \leftarrow \cdot \)
        (which corresponds to distributivity of pullbacks over coproducts); 
    \item 
        $\Phi$-coproduct distributive categories are the infinitary lextensive
        categories that are doubly-infinitary distributive as well, for the
        class $\Phi$ of all small categories (which corresponds to
        distributivity of limits over coproducts).
\end{itemize}

\subsection{Infinitary lextensive categories}
\label{subsect:free.lext}

We recall that a category with small coproducts \( \cat C \) is
\textit{infinitary extensive} if it has pullbacks along coproduct inclusions,
and if the coproducts are \textit{disjoint} and \textit{pullback-stable}. This
can be expressed in three conditions:
\begin{enumerate}[label=(\alph*)]
    \item
        \label{enum:disjoint}
        for every pair of objects \( A, B \in \cat C \), we have a pullback
        diagram:
        \begin{equation*}
            \begin{tikzcd}
                0 \ar[r] \ar[d] 
                    \ar[rd,"\ulcorner"{very near start,rotate=180},phantom]
                & A \ar[d] \\ B \ar[r] & A+B
            \end{tikzcd}
        \end{equation*}
    \item
        \label{enum:pb-stab-1}
        for each morphism \( f \colon Y \to \sum_{i \in I} X_i\), if we take 
        pullbacks along the coproduct inclusions \( X_i
        \xrightarrow{\iota_i} \sum_{i \in I} X_i \), 
        \begin{equation*}
            \begin{tikzcd}
                Y_i \ar[r,"\iota_i"] \ar[d] 
                    \ar[rd,"\ulcorner"{very near start,rotate=180},phantom]
                    & Y \ar[d,"f"] \\
                X_i \ar[r,"\iota_i",swap] & \sum_{i \in I} X_i
            \end{tikzcd}
        \end{equation*}
        we have that \( Y_i \xrightarrow{\iota_i} Y \) form a coproduct
        diagram as well, and
    \item 
        \label{enum:pb-stab-2}
        for every family \( (f_i \colon Y_i \to X_i)_{i \in I} \) of
        morphisms, the following commutative square
        \begin{equation*}
            \begin{tikzcd}
                Y_i \ar[r,"\iota_i"] \ar[d, "f_i"'] 
                    \ar[rd,"\ulcorner"{very near start,rotate=180},phantom]
                    & \sum_{i \in I} Y_i \ar[d,"\sum_{i\in I}f_i"] \\
                X_i \ar[r,"\iota_i",swap] & \sum_{i \in I} X_i
            \end{tikzcd}
        \end{equation*}
        is a pullback diagram.
\end{enumerate}

We also make use of the following notation: if \( \cat C \) is a category with
coproducts and a terminal object \( \terminal \), we let \( - \pt \terminal
\colon \Set \to \cat C \) be the functor left adjoint to \( \cat C(\terminal, -)
\colon \cat C \to \Set \). We highlight that if \( \cat C \) has a terminal
object \( \terminal \), then so does \( \Fam(\cat C) \), so we have a functor \(
- \pt \terminal \colon \Set \to \Fam(\cat C) \).

The following result, appearing in \cite{CV04} and \cite{PL23b}, is an important
step in the characterization of the \( \Fam_{\Ll_\fin} \)-pseudoalgebras:
\begin{lemma}
    \label{lem:lex}
    Let \( \cat C \) be a category with finite limits and coproducts. Then the
    following are equivalent:
    \begin{enumerate}[label=\textnormal{(\roman*)}]
        \item 
            \label{enum:c.lex}
            \( \cat C \) is infinitary lextensive;
        \item
            \label{enum:coprod.c.lex}
            \( \sum \colon \Fam(\cat C) \to \cat C \) preserves finite limits.
    \end{enumerate}
\end{lemma}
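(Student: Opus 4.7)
The plan is to describe finite limits in $\Fam(\cat C)$ explicitly and then transfer conditions between the two sides. Recall that the terminal object of $\Fam(\cat C)$ is the singleton family $(\terminal)$, while the pullback of $(X_i)_{i\in I} \xrightarrow{(\phi, f_i)} (Z_k)_{k\in K} \xleftarrow{(\psi, g_j)} (Y_j)_{j\in J}$ in $\Fam(\cat C)$ is the family $\big(X_i \times_{Z_{\phi(i)}} Y_j\big)$ indexed by the set-theoretic pullback $I \times_K J = \{(i,j) \mid \phi(i) = \psi(j)\}$.

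For \ref{enum:coprod.c.lex} $\Rightarrow$ \ref{enum:c.lex}, I would extract each of (a), (b), (c) by applying $\sum$ to a carefully chosen pullback in $\Fam(\cat C)$. For (a), the pullback of the singleton inclusions $(A) \to (A,B) \leftarrow (B)$ has empty index set, hence is the empty family, which $\sum$ sends to $\initial$. For (c), given $(f_i \colon Y_i \to X_i)_{i \in I}$, the pullback of $(Y_i)_I \to (X_i)_I$ (identity reindexing with components $f_i$) against the singleton inclusion $(X_{i_0}) \to (X_i)_I$ has index set $\{i_0\}$ and fiber $Y_{i_0}$. For (b), given $f \colon Y \to \sum_i X_i$, I would pull back the singleton morphism $(Y) \xrightarrow{f} (\sum X_i)$ against the morphism $(X_i)_I \to (\sum X_i)$ whose reindexing is $I \to \ast$ and whose components are the inclusions $\iota_i$: the pullback in $\Fam(\cat C)$ is the family $(Y_i)_I$ with $Y_i = Y \times_{\sum X_i} X_i$, and the key observation is that $\sum$ sends the latter morphism to the identity on $\sum X_i$ (by the universal property of coproducts), so the pullback in $\cat C$ collapses to $Y$, giving $\sum_i Y_i \iso Y$.

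For \ref{enum:c.lex} $\Rightarrow$ \ref{enum:coprod.c.lex}, preservation of the terminal object is immediate, so it suffices to exhibit an isomorphism $\big(\sum X_i\big) \times_{\sum Z_k} \big(\sum Y_j\big) \iso \sum_{(i,j) \in I \times_K J} X_i \times_{Z_{\phi(i)}} Y_j$. I would apply (b) to the projection onto $\sum X_i$ to decompose the left-hand side as $\sum_i \big(X_i \times_{\sum Z_k} \sum Y_j\big)$, then apply (b) once more to the projection onto $\sum Y_j$ to obtain $\sum_{i,j} \big(X_i \times_{\sum Z_k} Y_j\big)$; finally, disjointness (a), together with the strictness of $\initial$ that it entails, identifies $X_i \times_{\sum Z_k} Y_j$ with $X_i \times_{Z_{\phi(i)}} Y_j$ when $\phi(i) = \psi(j)$ and with $\initial$ otherwise, yielding the desired isomorphism.

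The main obstacle is the bookkeeping in this reverse direction, where the nested applications of (a) and (b) must be arranged to commute pullbacks past coproducts consistently with the reindexings $\phi$ and $\psi$. The forward direction is comparatively direct, with the only subtlety being the identification in the argument for (b) of the morphism induced by the inclusions $(\iota_i)$ with the identity on $\sum X_i$ after applying $\sum$, which is precisely what forces the pullback in $\cat C$ to collapse to $Y$.
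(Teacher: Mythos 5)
Your proof of \ref{enum:coprod.c.lex} \( \implies \) \ref{enum:c.lex} is essentially the paper's: the three pullbacks you apply \( \sum \) to are exactly the diagrams \eqref{eq:in.diag}, including the key observation that \( \sum \) sends \( (X_i)_{i\in I} \to \sum_i X_i \) to the identity. For the converse, however, you take a genuinely different route. The paper invokes \cite[Lemma~7.1]{PL23b}, which gives an equivalence \( \Fam(\cat C) \eqv \big(\cat C \comma (-\pt\terminal)\big) \) under which \( \sum \) becomes the finite-limit-preserving projection to \( \cat C \); the whole direction is then a one-line transport along that equivalence. You instead compute directly: reduce finite limits to the terminal object plus pullbacks, use the explicit description of pullbacks in \( \Fam(\cat C) \) (index set the set-theoretic pullback, fibers \( X_i \times_{Z_{\phi(i)}} Y_j \)), and verify the comparison map by iterating condition~\ref{enum:pb-stab-1} over the two projections and then invoking disjointness and strictness of \( \initial \) to discard the cross terms with \( \phi(i)\neq\psi(j) \). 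This is correct, and it has the virtue of being self-contained and of making visible exactly how the extensivity axioms produce distributivity of pullbacks over coproducts; the price is the bookkeeping you acknowledge, plus a few standard facts you lean on implicitly --- that coproduct injections are monic (needed to identify \( X_i \times_{\sum Z_k} Y_j \) with \( X_i \times_{Z_{\phi(i)}} Y_j \) when the indices agree) and that the isomorphisms you build at each stage compose to the \emph{canonical} comparison map, not merely some isomorphism of objects. Both of these are routine consequences of \ref{enum:disjoint}--\ref{enum:pb-stab-2} and of the fact that each step is itself a pullback comparison, so there is no gap, but a written-out version should say so. The paper's route is shorter because the comma-category description has already absorbed this computation.
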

\begin{proof}
    For an infinitary lextensive \( \cat C \), \cite[Lemma~7.1]{PL23b}
    guarantees that we have an equivalence \( \Fam(\cat C) \eqv \big(\cat C
    \comma (-\pt \terminal )\big) \), and that the projection \(\big(\cat C
    \comma (-\pt \terminal)\big) \to \cat C \) preserves finite limits.
    Moreover, we also establish that the composite
    \begin{equation*}
        \begin{tikzcd}
            \Fam(\cat C) \ar[r,"\eqv"] 
                & \big(\cat C \comma (- \pt \terminal)\big) \ar[r]
                & \cat C
        \end{tikzcd}
    \end{equation*}
    corresponds to the coproduct functor  \( \Fam(\cat C)\to \cat C\). This
    shows that  \ref{enum:c.lex} \( \implies \)  \ref{enum:coprod.c.lex}.

    Now, if we assume \ref{enum:coprod.c.lex}, it follows in particular that \(
    \sum \) preserves pullbacks. So, we consider the following pullback diagrams
    in \( \Fam(\cat C) \)
    \begin{equation}
        \label{eq:in.diag}
        \begin{aligned}
            \begin{tikzcd}
                \emptyset \ar[r] \ar[d] 
                        \ar[rd,"\ulcorner"{rotate=180,very near start},phantom]
                & A_0 \ar[d] \\
                A_1 \ar[r] & (A_i)_{i \in \{0,1\}}
            \end{tikzcd}
            &
            \begin{tikzcd}
                (Y_i)_{i \in I} \ar[r] \ar[d] 
                        \ar[rd,"\ulcorner"{rotate=180,very near start},phantom]
                & Y \ar[d,"f"] \\
                (X_i)_{i\in I} \ar[r] & \sum_{i \in I} X_i
            \end{tikzcd}
            &
            \begin{tikzcd}
                V_j \ar[r] \ar[d] 
                        \ar[rd,"\ulcorner"{rotate=180,very near start},phantom]
                & (V_j)_{j \in J} \ar[d] \\
                W_j \ar[r] & (W_j)_{j \in J}
            \end{tikzcd}
        \end{aligned}
    \end{equation} 
    for objects \( A_0, A_1 \in \cat C \), a morphism \( f \colon Y \to \sum_{i
    \in I} X_i\) in \( \cat C \), and a family of morphisms \( (g_j \colon V_j
    \to W_j)_{j \in J} \) in \( \cat C \).

    Since the coproduct functor preserves pullbacks, it can be composed with
    each diagram \eqref{eq:in.diag} to respectively obtain the pullback diagrams
    in \ref{enum:disjoint}, \ref{enum:pb-stab-1} and \ref{enum:pb-stab-2}.
    Hence, we witness the infinitary extensivity of \( \cat C \), thereby
    confirming that \ref{enum:coprod.c.lex} \( \implies \) \ref{enum:c.lex}.
\end{proof}

Now, by Lemma \ref{lem:bieq} and the description for \( \Fam_{\Lfin}
\)-pseudoalgebras, we conclude, as a corollary, that:

\begin{theorem}
    \label{thm:inf.lext}
    The 2-category \( (\Fam \circ \Lfin) \dash \PsAlg \) consists of
    infinitary lextensive categories, and functors preserving coproducts and
    finite limits.
\end{theorem}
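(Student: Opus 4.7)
The plan is to assemble the theorem from the three main ingredients the excerpt has just established: the biequivalence of Lemma~\ref{lem:bieq}, the general description of $\FamT$-pseudoalgebras (instantiated at $\Tt = \Lfin$), and the characterization of infinitary lextensivity in Lemma~\ref{lem:lex}. No new categorical construction is needed; the work is essentially an unpacking.

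First, I would invoke Lemma~\ref{lem:fam.lifts} with $\Phi = \fin$ to obtain the biequivalence
\begin{equation*}
    \Fam_{\Lfin}\dash\PsAlg \eqv (\Fam \circ \Lfin)\dash\PsAlg,
\end{equation*}
so it suffices to identify the $\Fam_{\Lfin}$-pseudoalgebras with infinitary lextensive categories. Using the description of $\FamT$-pseudoalgebras recalled right after Lemma~\ref{lem:bieq}, applied with $\Tt = \Lfin$, an object of $\Fam_{\Lfin}\dash\PsAlg$ is a category $\cat C$ equipped with: an $\Lfin$-pseudoalgebra structure, which is just the property that $\cat C$ has finite limits; a $\Fam$-pseudoalgebra structure, which is just the property that $\cat C$ has (small) coproducts; and the requirement that the coproduct functor $\sum \colon \Fam(\cat C) \to \cat C$ lifts to an $\Lfin$-pseudomorphism, i.e.\ preserves finite limits. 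Lemma~\ref{lem:lex} then identifies this data with exactly the condition that $\cat C$ is infinitary lextensive.

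For the morphisms, I would again use the description after Lemma~\ref{lem:bieq}: a $\Fam_{\Lfin}$-pseudomorphism $F \colon \cat C \to \cat D$ is simultaneously an $\Lfin$-pseudomorphism (hence finite-limit preserving, since $\Lfin$ is lax idempotent and being a pseudomorphism is a property) and a $\Fam$-pseudomorphism (hence coproduct preserving, by the same lax-idempotent argument for $\Fam$); the compatibility between the two structures is automatic because both pseudomonads involved are property-like.

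The only step requiring slight care is the direction of Lemma~\ref{lem:lex} that is used: we need \ref{enum:coprod.c.lex}$\implies$\ref{enum:c.lex} to see that a $\Fam_{\Lfin}$-pseudoalgebra is infinitary lextensive, and \ref{enum:c.lex}$\implies$\ref{enum:coprod.c.lex} to see that every infinitary lextensive category with finite limits underlies one. Since both directions are already in hand, there is no genuine obstacle; the theorem follows as a corollary of the preceding three lemmas combined with the recalled general description of $\FamT$-pseudoalgebras and pseudomorphisms.
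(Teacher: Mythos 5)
Your proposal is correct and follows essentially the same route as the paper, which derives Theorem~\ref{thm:inf.lext} as an immediate corollary of the biequivalence \( \Fam_{\Lfin}\dash\PsAlg \eqv (\Fam \circ \Lfin)\dash\PsAlg \), the description of \( \FamT \)-pseudoalgebras and pseudomorphisms recalled after Lemma~\ref{lem:bieq}, and the characterization in Lemma~\ref{lem:lex}. Your unpacking of the object and morphism data is exactly the argument the paper leaves implicit.
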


\subsection{Infinitary extensive categories with pullbacks}
\label{subsect:free.ext}

We can still obtain results analogous to Lemma
\ref{lem:lex} even in the absence of terminal objects.

\begin{lemma}
    \label{lem:ext}
    Let \( \cat C \) be a category with coproducts and pullbacks. The following
    are equivalent:
    \begin{enumerate}[label=\textnormal{(\roman*)}]
        \item 
            \label{enum:coprod.pb}
            The coproduct functor \( \sum \colon \Fam(\cat C) \to \cat C \)
            preserves pullbacks.
        \item
            \label{enum:ext}
            \( \cat C \) is infinitary extensive.
    \end{enumerate}
\end{lemma}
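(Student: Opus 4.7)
My plan is to prove the two implications separately. For \ref{enum:coprod.pb} \( \implies \) \ref{enum:ext}, I will mirror the second half of the proof of Lemma \ref{lem:lex}: the three squares displayed in \eqref{eq:in.diag} are genuine pullbacks in \( \Fam(\cat C) \), none of which requires a terminal object, so applying the pullback-preserving functor \( \sum \) to each of them recovers the three pullback squares \ref{enum:disjoint}, \ref{enum:pb-stab-1}, \ref{enum:pb-stab-2} witnessing that \( \cat C \) is infinitary extensive.

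For the converse \ref{enum:ext} \( \implies \) \ref{enum:coprod.pb}, the comma-category equivalence \( \Fam(\cat C) \eqv \cat C \comma (- \pt \terminal) \) used in Lemma \ref{lem:lex} is no longer available, so I will compare both pullbacks directly. A cospan \( (Y_j)_{j \in J} \to (X_i)_{i \in I} \leftarrow (Z_k)_{k \in K} \) in \( \Fam(\cat C) \) is specified by index maps \( \phi \colon J \to I \), \( \psi \colon K \to I \) together with morphisms \( f_j \colon Y_j \to X_{\phi(j)} \) and \( g_k \colon Z_k \to X_{\psi(k)} \), and a direct verification shows its pullback in \( \Fam(\cat C) \) is the family \( (Y_j \times_{X_{\phi(j)}} Z_k)_{(j,k) \colon \phi(j) = \psi(k)} \). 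Applying \( \sum \) yields \( \sum_{(j,k) \colon \phi(j) = \psi(k)} Y_j \times_{X_{\phi(j)}} Z_k \), so the task reduces to identifying this object with the pullback of \( \sum_j Y_j \to \sum_i X_i \leftarrow \sum_k Z_k \) in \( \cat C \).

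To carry this out I will invoke the slice-category decomposition \( \cat C \comma \sum_i X_i \eqv \prod_i \cat C \comma X_i \) coming out of infinitary extensivity via pullback along the coproduct inclusions. Condition \ref{enum:pb-stab-1} identifies the two legs of the cospan with the families \( \left(\sum_{j \colon \phi(j) = i} Y_j \to X_i\right)_i \) and \( \left(\sum_{k \colon \psi(k) = i} Z_k \to X_i\right)_i \); the pullback is then computed componentwise over each \( X_i \), and condition \ref{enum:pb-stab-2} (pullback stability of coproducts) lets me distribute over the two coproducts on each slice, so that the \( X_i \)-component of the pullback is \( \sum_{(j,k) \colon \phi(j) = i = \psi(k)} Y_j \times_{X_i} Z_k \). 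Reassembling these components over \( \sum_i X_i \) via \ref{enum:pb-stab-1} produces precisely \( \sum_{(j,k) \colon \phi(j) = \psi(k)} Y_j \times_{X_{\phi(j)}} Z_k \), matching the previous calculation.

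The main difficulty lies in checking that the chain of identifications assembles into the canonical comparison morphism, rather than merely into some isomorphism between the two sides. This is a careful but routine bookkeeping exercise: each invoked instance of the infinitary extensivity axioms supplies an explicit universal pullback square, and juxtaposing these squares produces, by construction, precisely the comparison morphism from \( \sum \) of the pullback in \( \Fam(\cat C) \) to the pullback of the \( \sum \)'s in \( \cat C \).
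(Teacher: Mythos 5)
Your proof of \ref{enum:coprod.pb} \(\implies\) \ref{enum:ext} coincides with the paper's: apply the pullback-preserving functor \( \sum \) to the three squares of \eqref{eq:in.diag}. For the converse you take a genuinely different route. The paper does not abandon comma categories merely because \( \cat C \) lacks a terminal object: it observes that each slice \( \cat C \comma X \) \emph{is} infinitary lextensive (it has a terminal object, and finite limits because \( \cat C \) has pullbacks), applies Lemma~\ref{lem:lex} to conclude that \( \Fam(\cat C) \comma X \eqv \Fam(\cat C \comma X) \to \cat C \comma X \) preserves finite limits, and then passes to a general base \( (X_i)_{i \in I} \) via the extensivity equivalence \( \Fam(\cat C)\comma (X_i)_{i\in I} \eqv \prod_{i\in I}\Fam(\cat C)\comma X_i \); since a pullback over \( (X_i)_{i\in I} \) is a finite limit in that slice, \( \sum \) preserves it. You instead compute both sides of the comparison explicitly: the pullback in \( \Fam(\cat C) \) is the family \( \big(Y_j \times_{X_{\phi(j)}} Z_k\big)_{\phi(j)=\psi(k)} \), and on the other side you decompose \( \cat C \comma \sum_i X_i \eqv \prod_i \cat C \comma X_i \), identify the legs fibrewise using \ref{enum:pb-stab-1}/\ref{enum:pb-stab-2}, and distribute the fibre product over the two coproducts in each slice. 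Both arguments are correct. The paper's is shorter but leans on Lemma~\ref{lem:lex} and on slices of extensive categories being lextensive; yours is more self-contained and makes the combinatorics of the pullback visible, at the price of the final bookkeeping step --- checking that the resulting isomorphism is the canonical comparison map --- which you rightly flag and which does go through, since each identification you invoke is realized by a universal pullback square and pullback squares paste.
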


\begin{proof}
    If \( \cat C \) is infinitary extensive and has pullbacks, then \( \cat C
    \comma X \) is infinitary lextensive for all objects \(X\). Thus, we may
    apply Lemma~\ref{lem:lex} to conclude that
    \begin{equation*}
        \begin{tikzcd}
            \Fam(\cat C) \comma X \eqv \Fam(\cat C \comma X)     
            \ar[r,"\sum"] & \cat C \comma X
        \end{tikzcd}
    \end{equation*}
    preserves finite limits. Since \( \Fam(\cat C) \) is infinitary extensive, we
    have
    \begin{equation*}
        \Fam(\cat C) \comma (X_i)_{i \in I} 
            \eqv \prod_{i \in I} \Fam(\cat C) \comma X_i,
    \end{equation*}
    and a product of finite limit preserving functors preserves finite limits as
    well. Thus, we deduce that \( \sum \colon \Fam(\cat C) \to \cat C \)
    preserves pullbacks, confirming that \ref{enum:ext} \( \implies \)
    \ref{enum:coprod.pb}.

    Conversely, if \( \sum \colon \Fam(\cat C) \to \cat C \) preserves
    pullbacks, we follow the same argument used for Lemma~\ref{lem:lex}: we
    compose the coproduct functor with each of the diagrams \eqref{eq:in.diag}
    to respectively obtain \ref{enum:disjoint}, \ref{enum:pb-stab-1} and
    \ref{enum:pb-stab-2}, exhibiting infinitary extensiveness. This proves that
    \ref{enum:coprod.pb} \( \implies \) \ref{enum:ext}. 
\end{proof}
   
As a consequence, by Lemma \ref{lem:bieq} and the description of \( \Fam_{\Lpb}\)-pseudoalgebras, we conclude that:

\begin{theorem}
    \label{thm:inf.ext.pb}
    The 2-category \( (\Fam \circ \Ll_\pb)\dash\PsAlg \) consists of infinitary
    extensive categories with pullbacks, and functors which preserve coproducts
    and pullbacks.
\end{theorem}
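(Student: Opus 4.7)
The plan is to mirror the proof of Theorem~\ref{thm:inf.lext}, substituting pullbacks for finite limits and invoking Lemma~\ref{lem:ext} in place of Lemma~\ref{lem:lex}. Concretely, I would instantiate Lemma~\ref{lem:fam.lifts} with the class \( \Phi = \pb \) consisting of the cospan category, which yields that \( \Fam \) lifts to a pseudomonad \( \Fam_{\Ll_\pb} \) on \( \pb\dash\Lim \), and furnishes the biequivalence
\[
\Fam_{\Ll_\pb}\dash\PsAlg \eqv (\Fam \circ \Ll_\pb)\dash\PsAlg.
\]
This reduces the theorem to identifying the objects and morphisms of \( \Fam_{\Ll_\pb}\dash\PsAlg \) with the claimed ones.

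For the objects, I would appeal to the general description of \( \Fam_\Tt \)-pseudoalgebras recorded just after Lemma~\ref{lem:bieq}: such an algebra is a category \( \cat C \) that is simultaneously a \( \Ll_\pb \)-pseudoalgebra (equivalently, \( \cat C \) has pullbacks) and a \( \Fam \)-pseudoalgebra (equivalently, \( \cat C \) has coproducts), and whose coproduct functor \( \sum \colon \Fam(\cat C) \to \cat C \) lifts to a \( \Ll_\pb \)-pseudomorphism, i.e.\ preserves pullbacks. Lemma~\ref{lem:ext} then identifies this last condition precisely with the infinitary extensivity of \( \cat C \), so the underlying categories of \( \Fam_{\Ll_\pb} \)-pseudoalgebras are exactly the infinitary extensive categories with pullbacks.

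For the morphisms, the same description from Section~\ref{sect:pseudomonad} states that a \( \Fam_{\Ll_\pb} \)-pseudomorphism is a functor that is simultaneously a \( \Fam \)-pseudomorphism (i.e.\ preserves coproducts) and a \( \Ll_\pb \)-pseudomorphism (i.e.\ preserves pullbacks), in a compatible manner. Combining all three observations and transporting along the biequivalence of Lemma~\ref{lem:bieq} yields the desired characterisation of \( (\Fam \circ \Ll_\pb)\dash\PsAlg \). There is no real obstacle here: once Lemma~\ref{lem:ext} is in hand, the theorem follows by the same formal packaging used to deduce Theorem~\ref{thm:inf.lext} from Lemma~\ref{lem:lex}.
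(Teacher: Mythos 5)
Your proposal is correct and follows essentially the same route as the paper: the theorem is deduced as an immediate corollary of Lemma~\ref{lem:ext}, the biequivalence of Lemma~\ref{lem:bieq} (via Lemma~\ref{lem:fam.lifts} instantiated at \( \Phi = \pb \)), and the description of \( \Fam_\Tt \)-pseudoalgebras and pseudomorphisms recorded after Lemma~\ref{lem:bieq}. Nothing further is needed.
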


\subsection{Doubly infinitary lextensive categories}
\label{subsect:free.dbl.inf.lext}

Inspired by the terminology of \cite{LV24}, we call the \( (\Fam \circ \Ll)
\)-pseudoalgebras \textit{doubly-infinitary lextensive} categories.

\begin{theorem}
    \label{thm:db.inf.lex}
    Let \( \cat C \) be a category with coproducts and limits. The following are
    equivalent:
    \begin{enumerate}[label=\textnormal{(\roman*)}]
        \item 
            \label{enum:coprod.lim}
            The coproduct functor \( \sum \colon \Fam(\cat C) \to \cat C \)
            preserves limits;
        \item
            \label{enum:db.inf.lext}
            \( \cat C \) is doubly infinitary extensive;
        \item 
            \label{enum:db.inf.dist.lex}
            \( \cat C \) is lextensive and doubly infinitary distributive.
    \end{enumerate}
\end{theorem}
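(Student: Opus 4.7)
The plan is to treat the three conditions as properties of the coproduct functor $\sum \colon \Fam(\cat C) \to \cat C$, and to reduce the equivalence of (i), (ii), (iii) to the observation that preservation of small limits decomposes as preservation of finite limits together with preservation of small products.

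First, (i) $\iff$ (ii) is essentially formal given the framework of this section. By definition, $\cat C$ being doubly-infinitary lextensive means that $\cat C$ is a $(\Fam \circ \Ll)$-pseudoalgebra. By Lemma~\ref{lem:bieq}, this is equivalent to being a $\Fam_{\Ll}$-pseudoalgebra, which, by the description of $\Fam_\Tt$-pseudoalgebras recalled after Lemma~\ref{lem:bieq}, amounts to $\cat C$ having small limits, having coproducts, and the coproduct functor $\sum \colon \Fam(\cat C) \to \cat C$ lifting to an $\Ll$-pseudomorphism -- that is, preserving small limits. This is exactly condition (i).

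For (i) $\implies$ (iii): a limit-preserving $\sum$ in particular preserves small products, and by the analogous characterisation of $(\Fam \circ \Ll_\Set)$-pseudoalgebras from \cite{LV24}, this identifies $\cat C$ as doubly-infinitary distributive. It also preserves finite limits, so Lemma~\ref{lem:lex} yields that $\cat C$ is infinitary lextensive, hence lextensive. Conversely, for (iii) $\implies$ (i): if $\cat C$ is lextensive, Lemma~\ref{lem:lex} gives that $\sum$ preserves finite limits, in particular equalisers; if $\cat C$ is doubly-infinitary distributive, the corresponding result from \cite{LV24} gives that $\sum$ preserves small products. The standard construction realising an arbitrary small limit $\lim D$ over $\cat J$ as an equaliser of two parallel maps between $\prod_{j \in \ob \cat J} D(j)$ and a product indexed by the arrows of $\cat J$ then shows that $\sum$ preserves all small limits, recovering (i).

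The main obstacle I anticipate is bookkeeping rather than anything deep: namely, ensuring that the products-and-equalisers decomposition of small limits is legitimately applicable (it is, since $\cat C$ has all small limits by hypothesis and the indexing products are over small sets), and carefully invoking the companion characterisation from \cite{LV24} identifying doubly-infinitary distributive categories with those for which $\sum$ preserves small products, so that the implication chain between (i) and (iii) is genuinely symmetric.
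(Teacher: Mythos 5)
Your proposal is correct and takes essentially the same route as the paper: the equivalence (i) $\iff$ (ii) is definitional via the description of the $\Fam_{\Ll}$-pseudoalgebras, and the equivalence with (iii) reduces to splitting preservation of small limits by $\sum$ into preservation of small products (the doubly-infinitary distributive part, via the characterisation in \cite{LV24}) plus preservation of a class of finite limits, then reassembling arbitrary limits from these pieces. The only divergence is cosmetic: the paper decomposes arbitrary limits into pullbacks and products and invokes Lemma~\ref{lem:ext}, whereas you use equalisers and products together with Lemma~\ref{lem:lex}; both decompositions are legitimate here since $\cat C$ has all small limits, and both arguments share the same implicit step of reading ``lextensive'' in (iii) as \emph{infinitary} lextensive when applying the respective lemma in the direction (iii) $\implies$ (i), a point the paper's own sketch glosses over equally.
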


\begin{proof} 
    We have the equivalence \ref{enum:coprod.lim} \( \iff \)
    \ref{enum:db.inf.lext} by definition.

    The equivalence \ref{enum:db.inf.dist.lex} \( \iff \) \ref{enum:db.inf.lext}
    follows by Lemma~\ref{lem:ext} and \cite[Lemma~3.1]{LV24}. We use the basic
    facts that any limit can be obtained via pullbacks and arbitrary products,
    and that infinitary extensive categories with products are, in particular,
    infinitary distributive (see \cite[Proposition 4.5]{CLW93}).
\end{proof}

\section{Exponentiability in freely generated structures}
\label{sect:expn}

The purpose of this section is to study the exponentiable objects of the free
completions \( \Fam\big(\Lfin(\cat C)\big) \) and \( \Fam\big(\Ll(\cat C)\big)
\), which constitute the main results of this work. Aiming for a self-contained
account of exponentiability, we begin by recalling the definition of
\textit{exponentiable object}, as well as some elementary properties.

In order to fix notation, we recall that an object \(E\) in a category \(\cat C
\) with finite products is \textit{exponentiable} at \(X\) if there exists an
object \( E \expn X \) and a natural isomorphism
\begin{equation}
    \label{eq:expn.hom.iso}
    \cat C(- \times E, X) \iso \cat C(-,E\expn X).
\end{equation}
We say that \(E\) is \textit{exponentiable} if \eqref{eq:expn.hom.iso} holds
naturally for every object \(X\) in \(\cat C \).

We revisit the following elementary observation about exponentiable objects used
in \cite[Remark~1]{LV24}:
\begin{lemma}
    \label{lem:exp.lims}
    Let \( \cat C \) be a category with finite products and \( \cat J \)-limits
    for a small category \( \cat J \). If \( F \colon \cat J \to \cat C \) is a
    diagram, and \( E \) is an object such that \( E \) is exponentiable at \(
    Fj \) for each \(j\) in \( \cat J\), then \( E \) is exponentiable at \(
    \lim_{j \in \cat J} F \) and
    \begin{equation*}
        E \expn \lim_{j \in \cat J} Fj \iso \lim_{j \in \cat J} (E \expn Fj)
    \end{equation*}
\end{lemma}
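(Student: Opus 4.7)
The plan is to prove the statement directly from the defining hom-set isomorphism for exponentials, together with the fact that representable functors preserve limits.

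First, I would start with the natural isomorphism \( \cat C(- \times E, Fj) \iso \cat C(-, E \expn Fj) \) guaranteed by the exponentiability of \( E \) at each \( Fj \), and note that this isomorphism is natural in \( j \in \cat J \) (since both sides are functorial in \( Fj \), and the exponential is determined up to unique isomorphism by its universal property). Taking the \( \cat J \)-indexed limit on both sides therefore yields
\begin{equation*}
    \lim_{j \in \cat J} \cat C(- \times E, Fj) \iso \lim_{j \in \cat J} \cat C(-, E \expn Fj).
\end{equation*}

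Next, I would apply the fact that representable functors preserve limits twice: on the left-hand side, \( \lim_{j \in \cat J} \cat C(- \times E, Fj) \iso \cat C\bigl(- \times E, \lim_{j \in \cat J} Fj\bigr) \); on the right-hand side, \( \lim_{j \in \cat J} \cat C(-, E \expn Fj) \iso \cat C\bigl(-, \lim_{j \in \cat J}(E \expn Fj)\bigr) \), where the latter limit exists in \( \cat C \) by hypothesis. Stringing these natural isomorphisms together gives
\begin{equation*}
    \cat C\bigl(- \times E, \lim_{j \in \cat J} Fj\bigr) \iso \cat C\bigl(-, \lim_{j \in \cat J}(E \expn Fj)\bigr),
\end{equation*}
which exhibits \( \lim_{j \in \cat J}(E \expn Fj) \) as an exponential \( E \expn \lim_{j \in \cat J} Fj \), establishing simultaneously the exponentiability and the formula by Yoneda.

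There is essentially no obstacle here; the argument is a standard chaining of natural isomorphisms. The only point requiring a touch of care is verifying that the exponential hom-set isomorphism is indeed natural in \( j \) (not only in the variable denoted by the dash), which follows from the uniqueness up to isomorphism of exponentials and a routine diagram chase, or equivalently from the observation that \( E \expn (-) \) is right adjoint to \( (-) \times E \) on the full subcategory spanned by those objects at which \( E \) is exponentiable.
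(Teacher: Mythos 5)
Your proposal is correct and follows essentially the same route as the paper: the paper's proof is exactly the chain \( \cat C(A\times E,\lim_j Fj) \iso \lim_j \cat C(A\times E, Fj) \iso \lim_j \cat C(A, E\expn Fj) \iso \cat C(A,\lim_j (E\expn Fj)) \), i.e.\ the exponential isomorphism at each \(Fj\) combined with the fact that representables preserve limits, concluding by Yoneda. Your extra remark on naturality in \(j\) is a point the paper leaves implicit, but it does not change the argument.
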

\begin{proof}
    For each object \(A\), we have a natural isomorphism
    \begin{equation*}
        \cat C(A\times E,\lim_{j \in \cat J}Fj)
            \iso \lim_{j\in \cat J} \cat C(A\times E, F_j)
            \iso \lim_{j\in \cat J} \cat C(A, E\expn F_j)
            \iso \cat C\big(A,\lim_{j \in \cat J} (E\expn F_j)\big)
    \end{equation*}
    as desired.
\end{proof}

We recall from \cite[Definition 6.1.3]{BJ01} that an object \(A\) of a category
\( \cat C \) is \textit{connected} if the hom-functor \( \cat C(A,-) \) preserves
coproducts. It is an immediate consequence of Lemma \ref{lem:phi.irreducible}
that the objects in the essential image of \( \yy \colon \cat C \to \Fam(\cat C)
\) are precisely the connected objects in \( \Fam(\cat C) \). We confirm that an
analogous characterization is available for the internal hom-functor:

\begin{lemma}
    \label{lem:expn.coprod}
    If \( \cat C \) is a category with finite products, and \( C \) is an
    exponentiable object in \(\Fam(\cat C) \), then the following are equivalent:
    \begin{enumerate}[label=\textnormal{(\roman*)}]
        \item
            \label{enum:conn}
            \(  C \) is connected.
        \item
            \label{enum:coprod.presv}
            \( C \expn - \) preserves coproducts.
    \end{enumerate}
\end{lemma}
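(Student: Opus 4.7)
The plan is to prove the equivalence using Yoneda together with the exponential adjunction $\Fam(\cat C)(A\times C,-)\iso \Fam(\cat C)(A,C\expn -)$ and the characterization (recorded just before the lemma) that the connected objects of $\Fam(\cat C)$ are precisely those in the essential image of $\yy\colon\cat C\to \Fam(\cat C)$.

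For the direction \ref{enum:coprod.presv}\(\implies\)\ref{enum:conn}, I would test connectedness of $C$ against the terminal object $\terminal$ of $\Fam(\cat C)$, which coincides with $\yy(\terminal_{\cat C})$ and is therefore connected by the characterization above. Given a family $(X_i)_{i\in I}$, applying the exponential adjunction, the hypothesis \ref{enum:coprod.presv}, and the connectedness of $\terminal$ in turn produces
$$\Fam(\cat C)\big(C,\sum_i X_i\big)\iso \Fam(\cat C)\big(\terminal, C\expn\sum_i X_i\big)\iso \Fam(\cat C)\big(\terminal,\sum_i C\expn X_i\big)\iso \sum_i \Fam(\cat C)(C,X_i),$$
which is exactly the statement that $C$ is connected.

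For the converse \ref{enum:conn}\(\implies\)\ref{enum:coprod.presv}, since $C$ is connected I may fix an isomorphism $C\iso \yy(C_0)$ with $C_0\in\cat C$. The key auxiliary observation I would establish first is that $\yy$ preserves binary products, so that $\yy(A)\times \yy(B)\iso \yy(A\times B)$ in $\Fam(\cat C)$; in particular, $\yy(A_k)\times C\iso \yy(A_k\times C_0)$ lies in the essential image of $\yy$ and is therefore connected for every $A_k\in\cat C$. Writing an arbitrary $A\in\Fam(\cat C)$ as $A\iso \sum_k \yy(A_k)$, and using that $-\times C$ preserves coproducts (as a left adjoint, by exponentiability of $C$), I would then chain
\begin{align*}
\Fam(\cat C)\big(A,C\expn \sum_i X_i\big) &\iso \Fam(\cat C)\big(A\times C,\sum_i X_i\big) \\
&\iso \prod_k \Fam(\cat C)\big(\yy(A_k)\times C,\sum_i X_i\big) \\
&\iso \prod_k \sum_i \Fam(\cat C)\big(\yy(A_k)\times C,X_i\big) \\
&\iso \prod_k \sum_i \Fam(\cat C)\big(\yy(A_k),C\expn X_i\big) \\
&\iso \Fam(\cat C)\big(A,\sum_i C\expn X_i\big),
\end{align*}
where the middle step uses the connectedness of $\yy(A_k)\times C$, the two flanking steps use the exponential adjunction, and the outer steps use that $\Fam(\cat C)(-,Z)$ converts coproducts to products together with the connectedness of $\yy(A_k)$. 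Yoneda then delivers the required isomorphism $C\expn \sum_i X_i\iso \sum_i C\expn X_i$.

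The main subtle point will be the product-preservation of $\yy$, which relies on the standing assumption that $\cat C$ has finite products and is verified by a direct check of the universal property in $\Fam(\cat C)$. Once that identification is in hand, the remainder of the argument is bookkeeping with Yoneda and the defining adjunction of $C\expn -$.
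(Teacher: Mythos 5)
Your proposal is correct and follows essentially the same route as the paper: both directions reduce to the hom-set formula \eqref{eq:main.formula.col} for $\Fam(\cat C)$, testing against the connected terminal object for \ref{enum:coprod.presv}$\implies$\ref{enum:conn}, and decomposing an arbitrary object into connected generators (using that $C\iso\yy(C_0)$ and that $\yy(A_k)\times C$ is again connected) for the converse, concluding by Yoneda. The only difference is cosmetic: you make explicit two facts the paper leaves implicit, namely that $\yy$ preserves binary products and that $-\times C$ preserves coproducts as a left adjoint.
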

\begin{proof}
    Let \( (A_i)_{i\in I} \) be a family of objects in \(\cat C\), and let \(
    (X_j)_{j \in J} \) be a family of objects in \(\Fam(\cat C) \). If \( \cat C
    \) is connected, then we have natural isomorphisms
    \begin{align*}
        \Fam(\cat C)&\Big((A_i)_{i\in I} \times C, \sum_{j\in J} X_j\Big)\\
        &\iso \Fam(\cat C)\Big((A_i \times C)_{i \in I}, \sum_{j \in J} X_j\Big)
        &\text{products in }\Fam(\cat C),\\
        &\iso \prod_{i \in I} \sum_{j \in J} \Fam(\cat C)(A_i \times C, X_j)
        &\eqref{eq:main.formula.col},\\
        &\iso \prod_{i \in I} \sum_{j \in J} \Fam(\cat C)(A_i, C \expn X_j)
        &C \text{ exponentiable},\\
        &\iso \Fam(\cat C)\Big((A_i)_{i \in I}, \sum_{j \in J} (C \expn X_j)\Big)
        &\eqref{eq:main.formula.col}. \\
    \end{align*}
    Hence, we conclude that
    \begin{equation*}
        \sum_{j \in J} (C \expn X_j) \iso C \expn \sum_{j \in J} X_j,
    \end{equation*}
    which confirms that \ref{enum:conn} \( \implies \) \ref{enum:coprod.presv}.

    Conversely, if \( C \expn -\) preserves coproducts, then for a family \(
    (X_j)_{j\in J} \) of objects in \(\Fam(\cat C) \), we have
    \begin{align*}
        \Fam(\cat C)\Big(C,\sum_{j \in J} X_j\Big)
        &\iso \Fam(\cat C)\Big(\terminal ,C \expn \sum_{j \in J} X_j\Big) 
        & C\text{ exponentiable},\\
        &\iso \Fam(\cat C)\Big(\terminal ,\sum_{j \in J} C \expn X_j\Big) 
        & \text{by hypothesis }\ref{enum:coprod.presv},\\
        &\iso \sum_{j \in J} \Fam(\cat C)(\terminal ,C \expn X_j)
        & \text{terminal connected},\\
        &\iso \sum_{j \in J} \Fam(\cat C)(C, X_j),
    \end{align*}
    hence, we conclude that \ref{enum:coprod.presv} \( \implies \)
    \ref{enum:conn}.
\end{proof}

Let \( \Phi \) be a class of small categories that includes all finite, discrete
categories, so that every \( \Ll_\Phi \)-pseudoalgebra has finite products. For
the sake of succinctness, we say that an object of \( \Fam(\Ll_\Phi(\cat C)) \) is a
\textit{generator} if it is in the essential image of the inclusion \( \cat C
\to \Fam(\Ll_\Phi(\cat C)) \).

We will give an inductive perspective on exponentials in \(
\Fam\big(\Ll_\Phi(\cat C)\big) \), and the following result is the cornerstone
for our development (see \cite[Remark 1]{LV24}):

\begin{lemma}
    \label{lem:exp.gen}
    If \( X \) is a generator and \(D\) is connected in \(
    \Fam\big(\Ll_\Phi(\cat C)\big) \), then \(D\) is exponentiable at \(X\) and
    we have 
    \begin{equation*}
        D \expn X \iso X + \hat{\cat C}(D,X)\pt \terminal
    \end{equation*}
    where \( \hat{\cat C} = \Fam\big(\Ll_\Phi(\cat C)\big) \).
\end{lemma}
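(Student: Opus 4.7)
The plan is to verify the representing universal property of $D \expn X$ directly: we check the hom-set isomorphism
\[
    \hat{\cat C}(A \times D, X) \iso \hat{\cat C}(A, X + S \pt \terminal),
\]
natural in $A$, where $S = \hat{\cat C}(D, X)$.

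First, I would reduce to the case where $A$ is connected. An arbitrary object of $\hat{\cat C} = \Fam\big(\Ll_\Phi(\cat C)\big)$ decomposes canonically as a coproduct $A = \sum_i A_i$ of its connected components (objects in the essential image of $\Ll_\Phi(\cat C)$), and binary products distribute over coproducts in $\hat{\cat C}$ since it is a free coproduct completion. Both sides of the claimed isomorphism therefore split as products over $i$ of the corresponding hom-sets for $A_i$, so it suffices to establish the iso for a connected $A$.

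For such $A$, the right-hand side is immediate: connectedness gives $\hat{\cat C}(A,-)$ preserving coproducts, hence
\[
    \hat{\cat C}(A, X + S \pt \terminal)
    \iso \hat{\cat C}(A, X) + S \cdot \hat{\cat C}(A, \terminal)
    = \hat{\cat C}(A, X) + S,
\]
using that $\hat{\cat C}(A, \terminal) = 1$ as $\terminal$ is terminal. For the left-hand side, $A \times D$ is again connected, being a product of two connected objects in $\hat{\cat C}$, and the embedding $\Ll_\Phi(\cat C) \hookrightarrow \hat{\cat C}$ preserves this product. Because $\Phi$ contains the 2-object discrete category (by the standing assumption that $\Phi$ includes all finite discrete categories), I may view $A \times D$ as the limit in $\Ll_\Phi(\cat C)$ of the diagram $H \colon \{1,2\} \to \Ll_\Phi(\cat C)$ with $H(1) = A$ and $H(2) = D$. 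Applying formula~\eqref{eq:phi.coirreducible} from Lemma~\ref{lem:phi.irreducible} at the generator $X$ then yields
\[
    \hat{\cat C}(A \times D, X)
    \iso \Ll_\Phi(\cat C)(A, X) + \Ll_\Phi(\cat C)(D, X)
    = \hat{\cat C}(A, X) + S,
\]
which matches the right-hand side.

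The main obstacle is precisely this last identification of $A \times D$ as a $\Phi$-shaped limit in order to trigger Lemma~\ref{lem:phi.irreducible}; it crucially depends on the hypothesis that $\Phi$ contains the finite discrete categories, so that binary products are themselves $\Phi$-limits in $\Ll_\Phi(\cat C)$. Equivalently, one could construct the evaluation directly: using infinitary distributivity, $(X + S \pt \terminal) \times D \iso X \times D + S \pt D$, and the counit is the copairing $[\pi_X, (s)_{s \in S}] \colon X \times D + S \pt D \to X$; the verification of universality then reduces to exactly the same hom-set decomposition computed above, with naturality in $A$ inherited from the naturality built into Lemma~\ref{lem:phi.irreducible} and the coproduct decomposition of the first step.
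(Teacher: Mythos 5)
Your proposal is correct and follows essentially the same route as the paper: the paper likewise verifies the representing isomorphism $\hat{\cat C}(A\times D,X)\iso \hat{\cat C}(A,X+\hat{\cat C}(D,X)\pt\terminal)$ by writing $A$ as a family $(E_i)_{i\in I}$, pushing the product onto the components, passing to $\Ll_\Phi(\cat C)$ by full faithfulness, and applying the hom-set formula \eqref{eq:main.formula.lim} to the binary product viewed as a $\Phi$-limit --- exactly your key step via \eqref{eq:phi.coirreducible}. Your explicit reduction to connected $A$ first is only a cosmetic reorganization of the paper's single chain of isomorphisms, and your remark that the argument hinges on $\Phi$ containing the finite discrete categories correctly identifies the standing hypothesis the paper imposes just before the lemma.
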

\begin{proof}
    Let \( (E_i)_{i \in I} \) be a family of objects in \( \Ll_\Phi(\cat C) \). 
    We have natural isomorphisms
    \begin{align*}
        \hat{\cat C}&\big((E_i)_{i \in I} \times D, X\big)\\
        &\iso \hat{\cat C}\big((E_i \times D)_{i \in I}, X\big) 
            & \text{products in } \hat{\cat C} \\
        &\iso \prod_{i \in I} \hat{\cat C}(E_i \times D, X) 
        & \hat{\cat C}(-,X)\text{ preserves products}, \\
        &\iso \prod_{i \in I} \Lfin (\cat C)(E_i \times D, X) 
        &\text{full faithfulness}, \\
        & \iso \prod_{i \in I} \Lfin (\cat C)(E_i,X)  + \Lfin (\cat C)(D,X) 
        &\eqref{eq:main.formula.lim}, \\
        &\iso  \prod_{i \in I} \hat{\cat C}(E_i,  X) + \hat{\cat C}(D,X) 
        &\text{full faithfulness}, \\
        &\iso 
        \hat{\cat C}\big((E_i)_{i \in I},  X+\hat{\cat C}(D,X) \pt \terminal \big)
        & \eqref{eq:main.formula.col}
    \end{align*}
\end{proof}

\subsection{Exponentials for free doubly infinitary lextensive categories}

Having reviewed the elementary properties of exponentiable objects, we proceed
to prove our main result on exponentiability of the objects of freely generated
doubly-infinitary lextensive categories:
    
\begin{theorem}
    \label{thm:main.two}
    The category \( \Fam\big(\Ll(\cat C)\big) \) is cartesian closed.
\end{theorem}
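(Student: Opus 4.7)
The plan is to prove, step by step, that every object $D$ of $\hat{\cat C}:=\Fam\big(\Ll(\cat C)\big)$ is exponentiable at every object $X$. By Theorem~\ref{thm:db.inf.lex}, $\hat{\cat C}$ is doubly-infinitary lextensive, so it has all small products and coproducts, and binary products distribute over small coproducts. Moreover, by Lemma~\ref{lem:phi.irreducible} the connected objects of $\hat{\cat C}$ are precisely those in the essential image of $\Ll(\cat C) \hookrightarrow \hat{\cat C}$, and this inclusion preserves products (Lemma~\ref{lem:fam.lifts}), so products of connected objects remain connected.

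First I reduce to the case where both $D$ and $X$ are connected. Writing $D\iso\sum_{i\in I}D_i$ and $X\iso\sum_{j\in J}X_j$ with every $D_i$ and $X_j$ connected, I claim that if every pair $(D_i,X_j)$ admits an exponential, then so does $(D,X)$, with
\begin{equation*}
    D\expn X \;\iso\; \prod_{i\in I}\sum_{j\in J}(D_i\expn X_j).
\end{equation*}
The coproduct $D=\sum_i D_i$ is handled by distributivity of finite products over coproducts, giving $\hat{\cat C}(A\times D,X)\iso\prod_{i}\hat{\cat C}(A\times D_i,X)$. The coproduct $X=\sum_j X_j$ is handled directly via Corollary~\ref{cor:formula}: for $A=(A_k)_k$, each $A_k\times D_i$ is a product of connected objects and hence connected, so Lemma~\ref{lem:phi.irreducible} yields $\hat{\cat C}(A_k\times D_i,\sum_j X_j)\iso\sum_{j}\hat{\cat C}(A_k\times D_i,X_j)$, and the displayed hom-set formula then follows by chaining these isomorphisms with the formula of Corollary~\ref{cor:formula}.

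It remains to handle $D,X$ both connected, so $X\in\Ll(\cat C)$. Since the free small-limit completion stabilises in one step, $X$ is the limit of some small diagram $F\colon\cat K\to\cat C$ of generators. Lemma~\ref{lem:exp.gen} gives exponentiability of $D$ at each generator $Fk$, and Lemma~\ref{lem:exp.lims} propagates this to $X=\lim_{k}Fk$, completing the proof. The main obstacle I anticipate is the treatment of the coproduct $X=\sum_j X_j$: one cannot invoke Lemma~\ref{lem:expn.coprod} here, as it presupposes that $D\expn\sum_j X_j$ already exists. Instead the required hom-set isomorphism has to be verified by hand, and the linchpin of that verification is the closure of $\Ll(\cat C)$ under products inside $\hat{\cat C}$, which is precisely what keeps each $A_k\times D_i$ connected.
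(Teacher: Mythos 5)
Your proof is correct and follows essentially the same route as the paper's: connected objects are shown exponentiable at generators via Lemma~\ref{lem:exp.gen}, propagated to all connected objects via Lemma~\ref{lem:exp.lims}, then to arbitrary coproducts in the codomain, and finally coproducts in the domain are handled by distributivity, yielding \( D \expn X \iso \prod_{i}\sum_{j}(D_i \expn X_j) \). The one point where you diverge is deliberate and sound: rather than citing Lemma~\ref{lem:expn.coprod} (whose stated hypothesis already assumes exponentiability of \(C\)), you inline the forward half of its proof, which only needs exponentiability at each \(X_j\) --- this is precisely how the paper's appeal to that lemma must be read anyway.
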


\begin{proof}
    First, we note that connected objects are exponentiable: 
    \begin{itemize}[label=--]
        \item 
            By Lemma~\ref{lem:exp.gen}, we have that any connected object in \(
            \Fam\big(\Ll(\cat C)\big) \) is exponentiable at the generators. 
        \item
            Any connected object is a limit of generators, so by
            Lemma~\ref{lem:exp.lims} we conclude that connected objects are
            exponentiable at any connected object in \( \Fam\big(\Ll(\cat
            C)\big) \). 
        \item
            Since any object in \( \Fam\big(\Ll(\cat C)\big) \) is a coproduct
            of connected objects, we simply apply Lemma~\ref{lem:expn.coprod} to
            deduce our claim.
    \end{itemize}

    Now, let \( (E_i)_{i \in I} \) and \( (D_j)_{j \in J} \) be families of
    objects in \( \Ll(\cat C) \), and \(X\) any object in \( \hat{\cat C} \).
    We have natural isomorphisms
    \begin{align*}
        \hat{\cat C}&\big((E_i)_{i \in I} \times (D_j)_{j \in J}, X\big) \\
        &\iso 
        \hat{\cat C}\big((E_i \times D_j)_{(i,j) \in I\times J}, X\big)
        &\text{binary products in }\hat{\cat C}, \\
        &\iso 
        \prod_{i \in I} \prod_{j \in J} \hat{\cat C}(E_i \times D_j, X)
        &\hat{\cat C}(-,X) \text{ preserves limits}, \\
        &\iso 
        \prod_{i \in I} \prod_{j \in J} \hat{\cat C}(E_i, D_j \expn X)
        &D_j \text{ connected (exponentiable)}, \\
        &\iso 
        \prod_{i \in I} \hat{\cat C}\Big(E_i, \prod_{j \in J} (D_j \expn X)\Big)
        \\
        &\iso 
        \hat{\cat C}\Big((E_i)_{i\in I}, \prod_{j \in J} (D_j \expn X) \Big)
    \end{align*}
    Thus, we obtain
    \begin{equation*}
        (D_j)_{j \in J} \expn X \iso \prod_{j \in J} (D_j \expn X),
    \end{equation*}
    confirming that coproducts of connected objects are exponentiable. But every
    object in \( \Fam(\Ll(\cat C)) \) is a coproduct of connected objects, hence
    the result follows.
\end{proof}

\subsection{Explicit descriptions of the exponentials}

Let \( (D_j)_{j \in J} \) and \( (E_k \colon \cat A_k \to \cat C)_{k \in K} \)
be families of objects in \( \cat L \), where \( \cat A_k \) is a small category
for each \(k \in K \).

The results of the previous subsection can be used to calculate an explicit
expression for the exponential \( (D_j)_{j \in J} \expn (E_k)_{k \in K} \) in \(
\Fam\big(\Ll(\cat C)\big)\): via Lemmas \ref{lem:exp.lims}--\ref{lem:exp.gen},
one of Theorems \ref{thm:main.one} or \ref{thm:main.two}, and the key ideas of
the proof of \cite[Theorem 2.3]{LV24}, we obtain 
\begin{equation}
    \label{eq:expn-expr}
    (D_j)_{j \in J} \expn (E_k)_{k \in K}
    \iso \Big( \prod_{j \in J} \lim_{l \in \cat A_{f_j^K}} 
                    \Delta_{f,j,l} \Big)_{f \in \Omega}
\end{equation}
where
\begin{align*}
    \Omega &= \prod_{j \in J} \sum_{k \in K} \lim_{l \in \cat A_k} 
                \big(\terminal + \Ll(\cat C)(D_j,E_{k,l})\big),
    \\
    \Delta_{f,j,l} &=
    \begin{cases}
        E_{f_j^K,l} & \text{if } f_j(l) \in \terminal \\
        \terminal  & \text{if } f_j(l) \in \Ll(\cat C)(D_j,E_{f_j^K,l})
    \end{cases} 
\end{align*}
and \( f^K_j \) is the projection of \(f_j\) onto \(K\) for each \( f \in
\Omega \), \(j \in J\).

\begin{remark}
    As long as $\cat C$ has an initial object \( \initial \), the exponentials
    may be given explicitly by 
    \begin{equation*}
        (D_j)_{j \in J} \expn (E_k)_{k \in K}
        \iso \bigg(\prod_{j \in J} \abstrcompldom{\Big(\pi_2\big(f(j)\big)\Big)}  
             \bigg)_{f \in \Omega}
    \end{equation*}
    where
    \begin{equation*}
        \Omega = \prod_{j \in J} \sum_{k \in K} 
                    \big(\Lfin(\cat C)\big)(D_j \times \initial,E_k),
    \end{equation*}
    and $\abstrcompldom{(g)}$ is defined via the following pushout in
    $\Lfin(\cat C)$, by co-extensivity:
    \begin{equation*}
        \begin{tikzcd}
            D_i\times\initial 
                \ar[r, "g"] \ar[d, "\pi_2",swap] 
                \ar[rd,"\ulcorner"{very near end},phantom]
            & E_{\pi_1(f(i))} \ar[d] \\
        \initial \ar[r]        
            & \abstrcompldom{(g)}.       
        \end{tikzcd}
    \end{equation*}
\end{remark}

\subsection{Exponentials for free infinitary lextensive categories}

As we remarked in Section \ref{sect:free.comp}, we have a fully faithful,
finite limit preserving functor 
\begin{equation*}
    \uu \colon \Lfin(\cat C) \to \Ll(\cat C) 
\end{equation*}
for every category \( \cat C \). By studying the fully faithful functor 
\begin{equation*}
    \uuo = \Fam(\uu) \colon \Fam\big(\Lfin(\cat C)\big) 
                     \to \Fam\big(\Ll(\cat C)\big),
\end{equation*}
we can deduce results about exponentiability of objects in \(
\Fam\big(\Lfin(\cat C)\big) \). More precisely, we have

\begin{lemma}
    \label{lem:expn.refl}
    The functor \( \uuo \) reflects exponentials of finite coproducts of
    connected objects.
\end{lemma}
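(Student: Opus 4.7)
The plan is to exploit full faithfulness of \( \uuo \) to transport an exponential computed in the cartesian closed category \( \Fam(\Ll(\cat C)) \) (Theorem~\ref{thm:main.two}) back to \( \Fam(\Lfin(\cat C)) \), via a Yoneda-style chain of bijections. More precisely, I will interpret ``reflects exponentials'' as follows: if \(D\) is a finite coproduct of connected objects in \( \Fam(\Lfin(\cat C)) \), \(X\) is any object, and \( \uuo(D) \expn \uuo(X) \) is isomorphic (in \( \Fam(\Ll(\cat C)) \)) to \( \uuo(Y) \) for some \(Y\) in \( \Fam(\Lfin(\cat C)) \), then \(Y\) realises the exponential \( D \expn X \) in \( \Fam(\Lfin(\cat C)) \).

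The core calculation is the following sequence of natural isomorphisms for an arbitrary test object \( Z \) in \( \Fam(\Lfin(\cat C)) \):
\begin{align*}
    \Fam(\Lfin(\cat C))(Z \times D, X)
    &\iso \Fam(\Ll(\cat C))\big(\uuo(Z \times D), \uuo(X)\big) \\
    &\iso \Fam(\Ll(\cat C))\big(\uuo(Z) \times \uuo(D), \uuo(X)\big) \\
    &\iso \Fam(\Ll(\cat C))\big(\uuo(Z), \uuo(D) \expn \uuo(X)\big) \\
    &\iso \Fam(\Ll(\cat C))\big(\uuo(Z), \uuo(Y)\big) \\
    &\iso \Fam(\Lfin(\cat C))(Z, Y),
\end{align*}
where the first and last steps use that \( \uuo \) is fully faithful (because \( \uu \) is, and \( \Fam \) preserves fully faithful functors), the second uses that \( \uuo \) preserves binary products, the third uses the hypothesised exponential adjunction in \( \Fam(\Ll(\cat C)) \), and the fourth is the assumed isomorphism. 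The resulting natural isomorphism exhibits \( Y \) as \( D \expn X \) in \( \Fam(\Lfin(\cat C)) \).

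The only non-tautological ingredient is the preservation of binary products by \( \uuo \). Here I would observe that \( \uu \colon \Lfin(\cat C) \to \Ll(\cat C) \) preserves finite limits (it is the canonical inclusion into a larger limit completion), and that in the \( \Fam \)-construction the binary product of families \((X_i)_{i \in I}\) and \((Y_j)_{j \in J}\) is computed component-wise as \((X_i \times Y_j)_{(i,j) \in I \times J}\) whenever the underlying binary products exist. Applying \( \Fam(\uu) \) to this expression and using that \( \uu \) preserves binary products yields exactly the product of the images in \( \Fam(\Ll(\cat C)) \). I anticipate that this routine verification is the main obstacle only in the sense of bookkeeping; the assumption that \( D \) is a finite coproduct of connected objects plays no role in this step but will be essential in the companion result (Theorem~\ref{thm:main.one}), where one must show, using the explicit formula \eqref{eq:expn-expr}, that the exponential \( \uuo(D) \expn \uuo(X) \) actually lies in the essential image of \( \uuo \).
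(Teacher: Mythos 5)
Your transport argument is correct as far as it goes, and it coincides with the opening of the paper's own proof: full faithfulness of \( \uuo \) together with preservation of binary products gives exactly the chain of isomorphisms reducing \( \Fam\big(\Lfin(\cat C)\big)(Z \times D, X) \) to \( \Fam\big(\Ll(\cat C)\big)\big(\uuo(Z), \uuo(D) \expn \uuo(X)\big) \). The difficulty is that you have only proved the conditional statement ``\emph{if} \( \uuo(D) \expn \uuo(X) \) happens to be isomorphic to some \( \uuo(Y) \), then \( Y \) is the exponential \( D \expn X \),'' which, as you yourself note, is essentially a tautology about fully faithful product-preserving functors and makes no use of the hypotheses that the coproduct is finite and its summands connected. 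That is not the content this lemma is required to carry: Theorem~\ref{thm:main.one} is derived from it as an \emph{immediate} corollary, so the essential-image verification you defer to ``the companion result'' is in fact the body of this proof, not of that one.

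Concretely, the paper's proof continues where yours stops. Writing \( D = (D_j)_{j \in J} \) with \( J \) finite and \( X = (E_k \colon \cat A_k \to \cat C)_{k \in K} \) with each \( \cat A_k \) finite, one evaluates \( \uuo(D) \expn \uuo(X) \) by the explicit formula \eqref{eq:expn-expr}: the indexing set \( \Xi \) is built from hom-sets \( \Lfin(\cat C)(D_j, E_{k,l}) \), which are unchanged under \( \uu \) by full faithfulness, and each component of the resulting family is a product over \( j \in J \) of limits over \( \cat A_{f^K_j} \) of objects \( \uu(\Gamma_{f,j,l}) \), where each \( \Gamma_{f,j,l} \) is either \( E_{f^K_j,l} \) or \( \terminal \). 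Because \( J \) and the \( \cat A_k \) are finite, these are \emph{finite} products and \emph{finite} limits, which the fully faithful functor \( \uu \) preserves (and hence reflects); so each component is \( \uu \) of an object of \( \Lfin(\cat C) \) and the whole family lies in the essential image of \( \uuo \). This is precisely where finiteness of the coproduct enters -- an infinite product of the \( \uu(\Gamma_{f,j,l}) \) need not be computable in \( \Lfin(\cat C) \) -- and without this step your argument establishes nothing about the existence of \( D \expn X \). You should fold this computation into the present proof rather than postponing it.
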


\begin{proof}
    Let \( (D_j)_{j \in J} \) be a \textit{finite} family of objects in \(
    \Lfin(\cat C) \), and let \( (E_k \colon \cat A_k \to \cat C )_{k \in K} \)
    be a family of objects in \( \Lfin(\cat C) \), where \( \cat A_k \) is a
    finite category for each \(k \in K \). Given any object \( X \) in \(
    \Fam\big(\Lfin(\cat C)\big) \), we have 
    \begin{align*}
        \Fam\big(&\Lfin(\cat C)\big)\big(X \times (D_j)_{j \in J}, 
                                    (E_k)_{k \in K}\big) \\
        &\iso \hat{\cat C}\Big(\uuo\big(X \times (D_j)_{j \in J}\big),
                               \uuo((E_k)_{k \in K})\Big)
        & \uuo\text{ fully faithful},\\
        &\iso \hat{\cat C}\Big(\uuo(X) \times \uuo\big((D_j)_{j \in J}\big),
                               \uuo\big((E_k)_{k \in K}\big)\Big)
        &\uuo\text{ preserves binary products} \\
        &\iso \hat{\cat C}\Big(\uuo(X), 
                               \uuo\big((D_j)_{j \in J}\big) 
                                    \expn \uuo\big((E_k)_{k \in K}\big)\Big)
        &\Fam\big(\Ll(\cat C)\big)\text{ is cartesian closed},
    \end{align*}
    where \( \hat{\cat C} = \Fam\big(\Ll(\cat C)\big) \). Moreover, we have
    \begin{align*}
        \uuo\big((D_j)_{j \in J}\big) 
            \expn \uuo\big((E_k)_{k \in K}\big)
        \iso \big(\uu(D_j)\big)_{j \in J} 
                \expn \big(\uu(E_k)\big)_{k \in K},
    \end{align*}
    and calculating the exponential as in \eqref{eq:expn-expr}, we obtain
    \begin{equation*}
        \big(\uu(D_j)\big)_{j \in J} 
            \expn \big(\uu(E_k)\big)_{k \in K}
            \iso \Big( \prod_{j \in J} \lim_{l \in \cat L_{f^K_j}}
                    \uu(\Gamma_{f,j,l}) \Big)_{f \in \Xi}
    \end{equation*}
    where 
    \begin{equation*}
        \Xi = \prod_{j \in J} \sum_{k \in K} \lim_{l \in \Ll_k}
                \big(\terminal + \Lfin(\cat C)(D_j,E_{k,l})\big),
    \end{equation*}
    \begin{equation*}
        \Gamma_{f,j,l} = 
        \begin{cases}
            E_{f_j^K,l} & \text{if } f_j(l) \in \terminal \\
            \terminal & \text{if } f_j(l) \in \Lfin(\cat C)(D_j,E_{f_j^K,l}),
        \end{cases} 
    \end{equation*}
    and \( f^K_j \) is the projection of \(f_j\) onto \(K\) for each \( f \in
    \Xi \), \(j \in J\).

    Now, since \( \uu \) is fully faithful and preserves finite limits, it must
    reflect them as well. Since we are given that \( J \) is finite, as well as
    \( \cat A_k \) for all \( k \in K \), we have 
    \begin{equation*}
        \prod_{j \in J} \lim_{l \in \cat A_{f^K_j}} \uu(\Gamma_{f,j,l})
        \iso \uu\Big( \prod_{j \in J} \lim_{l \in \cat A_{f^K_j}} \Gamma_{f,j,l}
                \Big).
    \end{equation*}
    and thus
    \begin{equation*}
        \uuo\big((D_j)_{j \in J}\big) 
            \expn \uuo\big((E_k)_{k \in K}\big)
        \iso \uuo\Big( \prod_{j \in J} \lim_{l \in \cat L_{f^K_j}}
                    \Gamma_{f,j,l} \Big)_{f \in \Xi}
    \end{equation*}
    so, since \( \uuo \) is fully faithful, we conclude that the exponential \(
    (D_j)_{j \in J} \expn (E_k)_{k \in K} \) in \( \Fam(\Lfin(\cat C)) \) exists and
    \begin{equation*}
        (D_j)_{j \in J} \expn (E_k)_{k \in K}
        \iso  \Big( \prod_{j \in J} \lim_{l \in \cat L_{f^K_j}}
                    \Gamma_{f,j,l} \Big)_{f \in \Xi},
    \end{equation*}
    as desired.
\end{proof}

As an immediate corollary, we obtain our second main result:
\begin{theorem}
    \label{thm:main.one}
    Finite coproducts of connected objects in \( \Fam(\Lfin(\cat C)) \) are
    exponentiable.
\end{theorem}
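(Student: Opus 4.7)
The plan is to invoke Lemma \ref{lem:expn.refl} directly, since the theorem is flagged as an immediate corollary. First I would observe that, by the characterisation of connected objects in free coproduct completions noted just before Lemma \ref{lem:expn.coprod} (itself a consequence of Lemma \ref{lem:phi.irreducible}), the connected objects of \( \Fam\bigl(\Lfin(\cat C)\bigr) \) are precisely those in the essential image of the unit \( \yy \colon \Lfin(\cat C) \to \Fam\bigl(\Lfin(\cat C)\bigr) \). Consequently, a finite coproduct of connected objects in \( \Fam\bigl(\Lfin(\cat C)\bigr) \) may be represented, up to isomorphism, as \( (D_j)_{j \in J} \) with \( J \) finite and each \( D_j \in \Lfin(\cat C) \).

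Next, I would note that any object of \( \Fam\bigl(\Lfin(\cat C)\bigr) \) has the form \( (E_k)_{k \in K} \) with \( E_k \in \Lfin(\cat C) \), and every \( E_k \) is itself a finite limit of a diagram \( \cat A_k \to \cat C \) by the description of \( \Lfin(\cat C) \) from Section~\ref{sect:free.comp}. This matches exactly the hypotheses of Lemma \ref{lem:expn.refl}, which establishes that \( (D_j)_{j \in J} \expn (E_k)_{k \in K} \) exists in \( \Fam\bigl(\Lfin(\cat C)\bigr) \). Hence, for each such target object an exponential exists, so \( (D_j)_{j \in J} \) is exponentiable.

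Since Lemma \ref{lem:expn.refl} already does the calculational work (by transporting the exponential along the fully faithful \( \uuo \) into the cartesian closed category \( \Fam\bigl(\Ll(\cat C)\bigr) \) from Theorem \ref{thm:main.two}, and then using that \( \uu \) preserves and reflects finite limits to pull it back), there is essentially no obstacle left; the only thing one must be careful about is to confirm that the arguments of the exponential match the setup of the lemma, namely that the family of connected objects is finite (to ensure the product \( \prod_{j \in J} \) is a finite limit, hence reflected by \( \uu \)) and that each \( E_k \) is a finite limit in \( \cat C \) (to ensure \( \lim_{l \in \cat A_k} \) is likewise finite). Both conditions follow from the hypothesis of finite coproducts of connected objects and the definition of \( \Lfin \). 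No further work is required.
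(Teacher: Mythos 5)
Your proposal is correct and follows the paper's own route: the theorem is deduced directly from Lemma \ref{lem:expn.refl} (reflection of exponentials along \( \uuo \) into the cartesian closed \( \Fam\bigl(\Ll(\cat C)\bigr) \) of Theorem \ref{thm:main.two}), and your preliminary identification of connected objects with the essential image of \( \yy \) correctly fills in why the hypotheses of that lemma cover exactly the finite coproducts of connected objects.
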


\section{Examples}
\label{sec:Examples}

In this section, we intend to give a brief discussion on examples of the various
notions of (l)extensive categories arising from the $\Phi$-coproduct
distributive categories discussed herein. More interestingly, we discuss
examples of the \textit{doubly-infinitary lextensive categories} introduced in
\ref{subsect:free.dbl.inf.lext}.

Recall that we consider the notion of doubly-infinitary distributive categories
introduced in \cite{LV24}, and the $2$-functor $\Dist =
\Famm{\Famm{-}^\op}^\op$. By Theorem \ref{thm:db.inf.lex}, doubly-infinitary
lextensive categories are precisely the doubly-infinitary distributive
categories which are also lextensive. With this in mind, we refer the reader to
the examples discussed in \cite{LV24}, and we make some considerations tailored
to our setting.

\subsection{Fundamental examples}
Let $\terminal $ be the terminal category (the category with precisely one
object and the identity morphism), and $\emptyset $ the initial category (the
empty category).

Let $\Phi$ be a class of small categories containing \( \emptyset \). The
category of sets
\begin{equation*}
    \Set \eqv \Fam(\terminal ) \eqv \Fam(\Ll_\Phi(\emptyset ))
\end{equation*}
is the free $\Phi $-coproduct distributive category on the empty category.
Hence, it is the initial object in the 2-category of $\Phi$-coproduct
distributive categories.

Let \( \Phi \) be a class of small categories containing all discrete
categories. Then the category
\begin{equation*}
    \Fam(\Set^\op) \eqv \Fam\big(\Ll(\terminal )\big) 
                   \eqv \Dist(\terminal) 
\end{equation*}
is the free $\Phi$-coproduct distributive category on the terminal category
$\terminal$. As such, $\Fam(\Set^\op)$ is both the free doubly-infinitary
lextensive category, and the free doubly-infinitary distributive category on
$\terminal$. By Theorem~\ref{thm:main.two} (or \cite[Theorem 2.3]{LV24}), we
conclude that $\Fam(\Set^\op)$ (also known as the category of polynomials) is
cartesian closed -- recovering the result of
\cite{DBLP:conf/cie/AltenkirchLS10}.

\subsection{Monadicity and presheaves}

Let \( \Phi \) be a class of categories. The ``\( \Phi \)-coproduct distributivity''
properties can be lifted through functors that create \( \Phi \)-limits and
coproducts. To be precise, we have the following elementary result:
\begin{lemma}
    \label{lem:monadicity-lextensivity}
	Let $G \colon \cat D\to \cat C $ be a functor that creates coproducts and \(
	\Phi \)-limits. If $\cat C $ is a $\Phi$-coproduct distributive category, then
	so is $\cat D$.
\end{lemma}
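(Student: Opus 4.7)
The plan is to translate the hypothesis on $\cat C$ into the preservation condition provided by the characterisation of $\Phi$-coproduct distributive categories as those $\Ll_\Phi$-pseudoalgebras whose coproduct functor is a $\Ll_\Phi$-pseudomorphism (that is, preserves $\Phi$-limits), and then transport this preservation across $G$. Concretely, the assumption gives that $\cat C$ has coproducts and $\Phi$-limits, and that $\sum \colon \Fam(\cat C)\to \cat C$ preserves $\Phi$-limits. Since $G$ creates coproducts and $\Phi$-limits, $\cat D$ inherits both, and $G$ preserves and reflects them. We then only need to verify that $\sum \colon \Fam(\cat D)\to \cat D$ preserves $\Phi$-limits.

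The key step is to use the pseudo-naturality of $\sum \colon \Fam \Rightarrow \id_{\CAT}$, which yields a commutative (up to isomorphism) square
\begin{equation*}
    \begin{tikzcd}
        \Fam(\cat D) \ar[r,"\Fam(G)"] \ar[d,"\sum"']
        & \Fam(\cat C) \ar[d,"\sum"] \\
        \cat D \ar[r,"G"']
        & \cat C\rlap{.}
    \end{tikzcd}
\end{equation*}
Because $G$ preserves $\Phi$-limits, the same argument as in the proof of Lemma~\ref{lem:fam.lifts} (using that $\Fam$ is a $2$-functor lifting to $\Phi\dash\Lim$) shows that $\Fam(G)$ preserves $\Phi$-limits. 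Combined with the hypothesis that $\sum \colon \Fam(\cat C)\to \cat C$ preserves $\Phi$-limits, the upper path in the square preserves $\Phi$-limits, and hence so does $G \circ \sum \colon \Fam(\cat D)\to \cat C$.

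To conclude, I would take any diagram $F \colon \cat K \to \Fam(\cat D)$ with $\cat K \in \Phi$ and consider the canonical comparison morphism $\sum(\lim F) \to \lim(\sum \circ F)$ in $\cat D$. Applying $G$ to this morphism yields, via the commutative square and the preservation facts just established, an isomorphism in $\cat C$. Since $G$ creates, and in particular reflects, $\Phi$-limits, the original comparison is already an isomorphism in $\cat D$. Thus $\sum \colon \Fam(\cat D) \to \cat D$ preserves $\Phi$-limits, and $\cat D$ is $\Phi$-coproduct distributive.

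The main technical point, and the one to be careful with, is the pseudo-naturality square above: it relies on $\Fam$ being a $2$-functor and on $\sum$ being the (pseudo)algebra structure map of the free completion, so that the equality $G\circ \sum \iso \sum \circ \Fam(G)$ is canonical. Once this square is in place, the rest of the argument is a routine application of the fact that a functor which creates a class of limits both preserves and reflects them.
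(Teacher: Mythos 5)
Your proof is correct. The paper itself offers no proof of this lemma --- it is stated as ``elementary'' and left to the reader --- so your argument fills a genuine gap rather than paralleling an existing one, and it uses exactly the characterisation the paper intends: $\Phi$-coproduct distributivity means having coproducts and $\Phi$-limits with $\sum \colon \Fam(\cat C) \to \cat C$ preserving $\Phi$-limits. One small point of precision: the square $G \circ \sum \iso \sum \circ \Fam(G)$ is not an instance of pseudonaturality of $\sum$ over all of $\CAT$ (the structure maps are only pseudonatural with respect to $\Fam$-pseudomorphisms); it holds here precisely because $G$ preserves coproducts, which follows from your hypothesis that $G$ creates them and $\cat C$ has them. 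With that justification made explicit, the rest of the argument --- $\Fam(G)$ preserves $\Phi$-limits because $G$ does (the fact from Gray cited in Lemma~\ref{lem:fam.lifts}), the upper-right composite therefore preserves $\Phi$-limits, and creation of $\Phi$-limits lets you reflect the comparison isomorphism back down to $\cat D$ --- is sound.
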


Since pseudomonadic pseudofunctors create bicategorical products (see, for
instance, \cite{Luc18b, Luc16} for lifting results on the pseudomonad setting,
and \cite{Str80, Str87} and \cite[{3.8}]{Luc18a} for bilimits), we find that:

\begin{lemma}
    \label{lem:products}
    The 2-categories \( \PhiLim \) and \( (\Fam \circ \Ll_\Phi)\dash \PsAlg \)
    have bicategorical products, given by the product of the underlying
    categories. 
    
    More specifically, if \( (C_i)_{i \in I} \) is a family of \( \Phi \)-limit
    complete (\( \Phi \)-coproduct distributive) categories, then
    \begin{equation*}
	   \prod _{i\in I} \cat C_i 
    \end{equation*}
    is \( \Phi \)-limit complete (\( \Phi \)-coproduct distributive).
\end{lemma}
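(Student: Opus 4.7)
The plan is to derive the statement from the general principle that pseudomonadic pseudofunctors create bicategorical limits, combined with the fact that $\CAT$ has all small products. First, I recall that the forgetful 2-functor in \eqref{eq:philim.forget},
\begin{equation*}
    \PhiLim \to \CAT,
\end{equation*}
is pseudomonadic by construction of $\Ll_\Phi$; similarly, the forgetful 2-functor $(\Fam \circ \Ll_\Phi)\dash\PsAlg \to \CAT$ is pseudomonadic since it is the canonical 2-functor associated to the pseudomonad $\Fam \circ \Ll_\Phi$ on $\CAT$ (as used throughout Section \ref{sect:pseudomonad} and justified via Lemma \ref{lem:bieq}).

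Next, I would invoke the creation-of-bilimits theorem for pseudomonadic pseudofunctors recalled in \cite[3.8]{Luc18a} (together with \cite{Str80, Str87, Luc18b, Luc16}): any pseudomonadic pseudofunctor $U \colon \cat B \to \cat A$ creates every bicategorical limit that exists in $\cat A$. Since the product of a family $(\cat C_i)_{i \in I}$ in $\CAT$ is given by the product category $\prod_{i \in I} \cat C_i$ with componentwise structure, and $\CAT$ therefore admits all small bicategorical products, it follows at once that $\PhiLim$ and $(\Fam \circ \Ll_\Phi)\dash\PsAlg$ admit small bicategorical products, and that these are computed on underlying categories. The two cases are entirely parallel.

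I do not expect any substantive obstacle here, as the argument is a direct application of the lifting and creation theorems for pseudomonads. If one wishes to unpack the content of ``creation'', one may verify directly that $\Phi$-limits in $\prod_{i \in I} \cat C_i$ are formed componentwise (using the corresponding pointwise formation in each $\cat C_i$), that coproducts are likewise componentwise, and finally that $\sum \colon \Fam\bigl(\prod_{i \in I} \cat C_i\bigr) \to \prod_{i \in I} \cat C_i$ preserves $\Phi$-limits whenever each factor does—all of which reduces to the observation that componentwise operations commute. The only mild subtlety worth a cross-check is that the cited creation results apply at the level of generality of lax-idempotent pseudomonads on $\CAT$ and their composites, which is already ensured by Lemma \ref{lem:fam.lifts}.
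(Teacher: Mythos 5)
Your proposal is correct and follows exactly the paper's own justification: the lemma is deduced from the fact that pseudomonadic pseudofunctors create bicategorical products (citing \cite{Str80, Str87}, \cite[3.8]{Luc18a}, and the lifting results of \cite{Luc16, Luc18b}), applied to the pseudomonadic forgetful 2-functors from \( \PhiLim \) and \( (\Fam \circ \Ll_\Phi)\dash\PsAlg \) to \( \CAT \), which has all small products computed componentwise. The optional componentwise verification you sketch is a harmless bonus beyond what the paper records.
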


By applying Corollary \ref{lem:monadicity-lextensivity}, we conclude that:
\begin{theorem}
    Let \( \cat J \) be a small category. If $\cat C $ is a $\Phi$-coproduct
    distributive category, then the functor category $\CAT(\cat J , \cat C) $ is
	$\Phi$-coproduct distributive as well. 
\end{theorem}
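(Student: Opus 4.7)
The plan is to realize $\CAT(\cat J,\cat C)$ as sitting over the set-indexed product $\prod_{j \in \ob \cat J} \cat C$ via a functor that creates coproducts and $\Phi$-limits, and then to invoke Lemma~\ref{lem:monadicity-lextensivity}.

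First, I would consider the joint evaluation functor
$$\mathrm{ev} \colon \CAT(\cat J,\cat C) \to \prod_{j \in \ob \cat J} \cat C, \qquad F \mapsto \bigl(F(j)\bigr)_{j \in \ob \cat J}.$$
The codomain is a set-indexed bicategorical product of copies of $\cat C$, and since $\cat C$ is $\Phi$-coproduct distributive by hypothesis, Lemma~\ref{lem:products} ensures that this product is itself $\Phi$-coproduct distributive. Thus the target of $\mathrm{ev}$ already enjoys the structural property we wish to transfer upstairs.

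The core step is then to verify that $\mathrm{ev}$ creates both coproducts and $\Phi$-limits, which is a classical consequence of the pointwise computation of (co)limits in functor categories. Given a diagram $D \colon \cat K \to \CAT(\cat J,\cat C)$ with $\cat K \in \Phi$, the candidate pointwise limit $L(j) := \lim_{k \in \cat K} D(k)(j)$ extends canonically to a functor $L \colon \cat J \to \cat C$, via the universal property of limits applied to the action of each $D(k)$ on morphisms of $\cat J$, naturally in $k$. The resulting functor is the desired limit of $D$ in $\CAT(\cat J,\cat C)$, and its lift is unique because $\mathrm{ev}$ is faithful and reflects isomorphisms (two parallel natural transformations agreeing componentwise coincide). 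The argument for coproducts is entirely analogous, since coproducts in $\CAT(\cat J,\cat C)$ are likewise computed pointwise.

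Once this creation property is in place, Lemma~\ref{lem:monadicity-lextensivity} immediately delivers the $\Phi$-coproduct distributivity of $\CAT(\cat J,\cat C)$. The main obstacle is purely bookkeeping: one must carefully verify that the pointwise assembly really yields a \emph{created} limit (or coproduct), not merely a preserved and reflected one, in the strict sense required by Lemma~\ref{lem:monadicity-lextensivity}. No genuine conceptual difficulty arises, however, since all verifications reduce to the facts that a natural transformation is determined by its components and that a cone in $\CAT(\cat J,\cat C)$ is a limit cone precisely when it is one componentwise.
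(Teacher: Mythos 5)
Your proposal is correct and follows essentially the same route as the paper: the paper likewise factors through the restriction functor $\CAT(\cat J,\cat C)\to \CAT(\ob\cat J,\cat C)\iso \prod_{j\in\ob\cat J}\cat C$, observes that it creates the relevant limits and colimits (computed pointwise), and concludes via Lemma~\ref{lem:products} and Lemma~\ref{lem:monadicity-lextensivity}. The only difference is that you spell out the creation argument, which the paper takes as standard.
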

\begin{proof}
	The result follows from the fact that the restriction/forgetful functor 
    \begin{equation*}
        \CAT(\cat J, \cat C) \to \CAT (\ob \cat J, \cat C) 
            \iso \prod_{j \in \ob \cat J } \cat C 
    \end{equation*}
	creates limits and colimits that exist in $\cat C$, and Lemma \ref{lem:products}.
\end{proof}
As a consequence, if $ \cat A $ is a small category, the presheaf category \(
\CAT(\cat A^\op, \Set) \) is $\Phi$-coproduct distributive, provided that $\Phi$
contains \( \emptyset \). In particular, \( \CAT(\cat A^\op, \Set) \) is
doubly-infinitary extensive.

\subsection{Finite and small bicategorical biproducts}

As remarked in \cite[4.3]{LV24}, the $2$-category of categories with products is
bicategorically semi-additive. This observation also extends to our setting.

Let \( \Phi \) be a class of small categories containing the finite discrete
categories. We note that the $2$-category \( \PhiLim \) of $\Phi$-limit complete
categories is naturally enriched over the $2$-category of symmetric monoidal
categories with the multilinear multicategorical structure. Together with Lemma
\ref{lem:products}, we conclude that the $2$-category \( \PhiLim \) has finite
bicategorical coproducts, which are equivalent to the finite bicategorical
products. In other words:

\begin{lemma}
    The 2-category \( \PhiLim \) has finite bicategorical biproducts.
\end{lemma}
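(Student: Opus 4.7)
The plan is to leverage Lemma~\ref{lem:products}, which already gives finite bicategorical products in \( \PhiLim \), and then show these coincide with finite bicategorical coproducts by exploiting the symmetric monoidal enrichment indicated just above the statement.

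First, I would identify a bicategorical zero object. Since the empty discrete category lies in \( \Phi \), every \( \Phi \)-limit complete category has a terminal object, and every \( \Phi \)-limit preserving functor preserves it. Hence the terminal category \( \terminal \) is both bicategorically terminal (as the empty bicategorical product, by Lemma~\ref{lem:products}) and bicategorically initial in \( \PhiLim \): any \( \Phi \)-limit preserving functor \( \terminal \to \cat D \) is essentially uniquely determined as the one picking out the terminal object of \( \cat D \).

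Next, given \( \Phi \)-limit complete categories \( \cat C, \cat D \), I would take the bicategorical product \( \cat C \times \cat D \) with projections \( \pi_{\cat C}, \pi_{\cat D} \) from Lemma~\ref{lem:products}, and upgrade it to a bicategorical coproduct by constructing injections
\begin{equation*}
    \iota_{\cat C} \colon \cat C \to \cat C \times \cat D,
    \qquad
    \iota_{\cat D} \colon \cat D \to \cat C \times \cat D,
\end{equation*}
given by \( \iota_{\cat C}(C) = (C, \terminal_{\cat D}) \) and \( \iota_{\cat D}(D) = (\terminal_{\cat C}, D) \), where \( \terminal_{\cat C}, \terminal_{\cat D} \) denote the respective terminal objects. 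Both are \( \Phi \)-limit preserving. For any \( F \colon \cat C \to \cat E \) and \( G \colon \cat D \to \cat E \) in \( \PhiLim \), the candidate pairing is \( [F, G] \colon \cat C \times \cat D \to \cat E \) defined by \( [F,G](C,D) = F(C) \times G(D) \); it is \( \Phi \)-limit preserving because it is the pointwise product, in the symmetric monoidal hom-category \( \PhiLim(\cat C \times \cat D, \cat E) \), of \( F \circ \pi_{\cat C} \) and \( G \circ \pi_{\cat D} \). Then \( [F,G] \circ \iota_{\cat C} \iso F \), using that \( G \) preserves the terminal object and that \( X \times \terminal \iso X \) in \( \cat E \); similarly for \( \iota_{\cat D} \).

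The remaining point to verify is the universal property up to the correct level of coherence: namely, that any \( H \colon \cat C \times \cat D \to \cat E \) in \( \PhiLim \) equipped with pseudonatural isomorphisms \( H \circ \iota_{\cat C} \eqv F \) and \( H \circ \iota_{\cat D} \eqv G \) is canonically equivalent to \( [F,G] \). I expect this coherence bookkeeping to be the main obstacle, and the cleanest route is to invoke the general principle that a bicategory enriched in symmetric monoidal categories admitting a bizero object and finite bicategorical products automatically has finite bicategorical biproducts --- the 2-dimensional analogue of the classical fact for commutative-monoid-enriched categories --- which applies to \( \PhiLim \) via the enrichment remarked on immediately before the statement.
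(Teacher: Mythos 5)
Your proposal is correct and takes essentially the same route as the paper: the paper derives the lemma from Lemma~\ref{lem:products} (finite bicategorical products) together with the natural enrichment of \( \PhiLim \) in symmetric monoidal categories, i.e.\ exactly the bicategorical analogue of the classical ``commutative-monoid-enriched category with finite products has biproducts'' principle that you invoke at the end. Your explicit construction of the zero object \( \terminal \), the injections \( \iota_{\cat C}(C) = (C,\terminal_{\cat D}) \), and the copairing \( [F,G](C,D) = F(C)\times G(D) \) simply spells out details the paper leaves implicit.
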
 

Moreover, it is clear that the hom-categories in the $2$-category \( \PhiLim \)
are themselves \( \Phi \)-limit complete -- moreover, noting that composition of
\( \Phi \)-limit preserving functors preserve \( \Phi \)-limits componentwise,
we conclude that:

\begin{lemma}
    \label{natural-enrichment}
    The 2-category of  $\Phi$-limit complete categories is naturally enriched
    over itself, with the multilinear multicategorical structure. 
\end{lemma}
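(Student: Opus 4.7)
The plan is to verify the two conditions that together define the asserted self-enrichment: first, that each hom-category $\PhiLim(\cat C, \cat D)$ is $\Phi$-limit complete; and second, that the $n$-ary composition
\[
\PhiLim(\cat C_0, \cat C_1) \times \cdots \times \PhiLim(\cat C_{n-1}, \cat C_n) \to \PhiLim(\cat C_0, \cat C_n)
\]
is $\Phi$-limit preserving separately in each variable, which is exactly the condition for it to qualify as an $n$-ary morphism in the multilinear multicategorical structure on $\PhiLim$.

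For the first step, I would construct $\Phi$-limits pointwise. Given a diagram $H \colon \cat J \to \PhiLim(\cat C, \cat D)$ with $\cat J \in \Phi$, set $L(X) := \lim_{j \in \cat J} H(j)(X)$; this exists since $\cat D$ has $\Phi$-limits, and functoriality of $L$ is routine. The key verification is that $L$ itself preserves $\Phi$-limits, which follows from the Fubini theorem for limits (limits commute with limits) combined with the $\Phi$-limit preservation of each $H(j)$: for $D \colon \cat K \to \cat C$ with $\cat K \in \Phi$,
\[
L(\lim_k D(k)) \iso \lim_j \lim_k H(j)(D(k)) \iso \lim_k \lim_j H(j)(D(k)) \iso \lim_k L(D(k)).
\]
Universality of $L$ as a limit in $\PhiLim(\cat C, \cat D)$ is then inherited from the universality of the pointwise limit in $\CAT(\cat C, \cat D)$, since cones with a $\Phi$-limit preserving vertex are, in particular, cones in $\CAT(\cat C, \cat D)$.

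For the second step, it suffices to argue the one-variable case and then invoke it once for each argument. Pre-composition $(-) \circ F$ preserves $\Phi$-limits trivially, because pointwise limits restrict to pointwise limits along $F$. Post-composition $G \circ (-)$ preserves $\Phi$-limits because $G$ itself is $\Phi$-limit preserving, hence $G$ commutes with the pointwise limits object-by-object. Iterating these observations across the $n$ variables of the $n$-ary composition yields the required multilinearity.

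I do not anticipate a genuine obstacle: the entire argument reduces to the pointwise computation of limits in functor categories together with the defining $\Phi$-limit preservation of morphisms in $\PhiLim$. The only point of care is the formal bookkeeping to pass from the separate-variable statements to the multilinear multicategorical framing, but this is standard once one fixes the interpretation of \emph{enriched over $\PhiLim$ with the multilinear multicategorical structure}, as introduced in the preceding paragraph.
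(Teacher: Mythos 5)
Your proposal is correct and follows essentially the same route as the paper, which justifies the lemma by the two observations you expand upon: that the hom-categories of $\PhiLim$ are $\Phi$-limit complete (via pointwise limits, with preservation by the limit functor following from commutation of limits), and that composition preserves $\Phi$-limits componentwise in each variable. The paper states these facts without detail, so your write-up simply supplies the routine verifications it leaves implicit.
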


If \( \Phi \) contains all (small) discrete categories, then by Lemmas
\ref{lem:products} and \ref{natural-enrichment},  we conclude that the
$2$-category of $\Phi$-limit complete categories has bicategorical coproducts,
which are equivalent to the bicategorical products. This is given as:

\begin{lemma}
    \label{lem:crazy}
    If \( \Phi \) is a class of small categories containing the discrete
    categories, then the $2$-category of $\Phi$-limit complete categories has
    infinite bicategorical biproducts.
\end{lemma}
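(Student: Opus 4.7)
The plan is to leverage Lemma~\ref{lem:products} by showing that the bicategorical product \( \prod_{i \in I} \cat C_i \) simultaneously serves as a bicategorical coproduct, thus witnessing a biproduct. Since \( \Phi \) contains the empty discrete category, every \( \Phi \)-limit complete category \( \cat C_j \) has a terminal object \( \terminal_j \), and the constant functor at \( \terminal_j \) preserves all limits (the limit of a constant-at-terminal diagram is a power of \( \terminal_j \), hence terminal). This supplies the ``zero morphisms'' used to build coproduct injections, in exact analogy with the way finite biproducts arise in semi-additive categories.

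\textbf{Injections and 1-cell universal property.} I would first define, for each \( i \in I \), an injection \( \iota_i \colon \cat C_i \to \prod_{j \in I} \cat C_j \) sending \( X \) to the family \( (Y_j)_{j \in I} \) with \( Y_i = X \) and \( Y_j = \terminal_j \) for \( j \neq i \); each \( \iota_i \) lies in \( \PhiLim \) as the pairing of the identity on the \(i\)-th component and constant-at-terminal functors elsewhere. To verify the coproduct universal property, given \( \cat D \in \PhiLim \) and a family of \( \Phi \)-limit preserving functors \( F_i \colon \cat C_i \to \cat D \), I would define \( F \colon \prod_{i \in I} \cat C_i \to \cat D \) by \( F\bigl((X_i)_i\bigr) = \prod_{i \in I} F_i(X_i) \), which is well-defined since \( \cat D \) admits \(I\)-indexed products (as \( \Phi \) contains the discrete category on \(I\)). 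The functor \( F \) preserves \( \Phi \)-limits because limits commute with limits, and \( F \circ \iota_i \simeq F_i \) follows because each \( F_j \) preserves terminals. Essential uniqueness rests on the componentwise observation that every \( (X_i)_i \) decomposes as \( \prod_{i \in I} \iota_i(X_i) \) in \( \prod_j \cat C_j \), forcing any \( G \) preserving products with \( G \circ \iota_i \simeq F_i \) to satisfy \( G \simeq F \).

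\textbf{Main obstacle.} The above sketches the equivalence on objects of hom-categories; the main obstacle will be promoting it to a pseudo-natural (in \( \cat D \)) 2-equivalence \( \PhiLim(\prod_i \cat C_i, \cat D) \simeq \prod_{i \in I} \PhiLim(\cat C_i, \cat D) \), which is what a bicategorical coproduct demands. Here the self-enrichment from Lemma~\ref{natural-enrichment} becomes indispensable: the hom-categories themselves have \( \Phi \)-limits, and composition is multilinear in each variable, so natural transformations \( \alpha_i \colon F_i \Rightarrow F'_i \) assemble coherently into a transformation with components \( \prod_i \alpha_i \), mirroring the finite-biproduct argument from \cite[4.3]{LV24} and making the coherence verification a routine (if lengthy) bookkeeping exercise rather than a conceptual hurdle.
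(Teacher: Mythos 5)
Your proposal is correct and follows essentially the same route as the paper, which derives the result in one line from Lemma~\ref{lem:products} (products of underlying categories) and Lemma~\ref{natural-enrichment} (self-enrichment with the multilinear structure); your argument is the explicit unpacking of that derivation, with terminal objects supplying the zero morphisms, the product $\prod_i F_i(X_i)$ supplying the coproduct-induced functor, and the enrichment handling the 2-cells. The only detail worth flagging is that you correctly note (and the paper leaves implicit) that the decomposition $(X_i)_i \simeq \prod_i \iota_i(X_i)$ and the well-definedness of $F$ both use that $\Phi$ contains the discrete category on the (small) index set $I$.
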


This allows us to understand freely generated $\Phi$-coproduct distributive
categories over coproducts of categories, as we, for instance, show in
Subsection \ref{sect:discrete-examples}. 

\subsection{Freely generated categorical structures on discrete
categories}
\label{sect:discrete-examples}

Now, we assume $\Phi $ be a class of small categories that contains all the
small (respectively, finite) discrete categories. 

If \( \cat C \) is a small (finite) discrete category, we have $\cat C \eqv
\sum_{c \in \ob \cat C} \terminal $. Since $\Ll _\Phi $ preserves small (finite)
bicategorical biproducts and $ \Ll_\Phi(\terminal) \eqv \Set^\op$, we have that
\begin{equation*}
    \Ll _\Phi(\cat C) 
        \eqv \Ll_\Phi\Big(\sum_{c\in \ob\cat C} \terminal\Big) 
        \eqv \prod_{c\in \ob\cat C} \Ll_\Phi(\terminal) 
        \eqv \prod_{c\in \ob\cat C} \Set^\op
\end{equation*}
by Lemma \ref{lem:crazy}. Therefore:

\begin{theorem}
    If \( \cat C \) is a small discrete category, then 
    \begin{equation*}
        \Fam(\Ll_\Phi(\cat C)) \eqv  \Fam\Big(\prod _{c\in \ob\cat C} \Set ^\op\Big).
    \end{equation*}
\end{theorem}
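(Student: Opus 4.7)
The plan is to leverage the calculation immediately preceding the theorem statement, which has already established the key equivalence
\begin{equation*}
    \Ll_\Phi(\cat C) \eqv \prod_{c \in \ob \cat C} \Set^\op,
\end{equation*}
and then simply apply the 2-functor \( \Fam \colon \CAT \to \CAT \) to both sides.

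More concretely, I would first remark that \( \Fam \), being a 2-functor (indeed, the underlying endo-2-functor of the free coproduct completion pseudomonad), preserves equivalences of categories: if \( F \colon \cat A \to \cat B \) is an equivalence in \( \CAT \), then \( \Fam(F) \colon \Fam(\cat A) \to \Fam(\cat B) \) is again an equivalence. Applying \( \Fam \) to the chain of equivalences displayed right above the theorem therefore yields
\begin{equation*}
    \Fam\bigl(\Ll_\Phi(\cat C)\bigr)
        \eqv \Fam\Bigl(\prod_{c \in \ob \cat C} \Ll_\Phi(\terminal)\Bigr)
        \eqv \Fam\Bigl(\prod_{c \in \ob \cat C} \Set^\op\Bigr),
\end{equation*}
which is exactly the statement of the theorem.

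I expect no real obstacle here: the substantive work has already been done in establishing the decomposition \( \Ll_\Phi(\cat C) \eqv \prod_{c \in \ob \cat C} \Set^\op \), which invoked Lemma \ref{lem:crazy} (infinite bicategorical biproducts in the 2-category of \( \Phi \)-limit complete categories) together with the identification \( \Ll_\Phi(\terminal) \eqv \Set^\op \). The only point worth emphasising in the write-up is that 2-functoriality of \( \Fam \) is what legitimises transporting the equivalence through the outer \( \Fam \), so I would state that explicitly in a single sentence rather than hiding it. No further calculation or verification is needed.
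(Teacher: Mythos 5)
Your proposal is correct and matches the paper exactly: the paper derives the theorem as an immediate consequence of the displayed chain \( \Ll_\Phi(\cat C) \eqv \prod_{c\in\ob\cat C}\Ll_\Phi(\terminal) \eqv \prod_{c\in\ob\cat C}\Set^\op \) (via Lemma~\ref{lem:crazy} and \( \Ll_\Phi(\terminal)\eqv\Set^\op \)), with the final step being precisely the application of \( \Fam \) to that equivalence. Your explicit remark that 2-functoriality of \( \Fam \) is what transports the equivalence is a reasonable point to spell out, but it adds nothing beyond what the paper leaves implicit.
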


In particular, this result describes the free doubly-infinitary distributive
categories, and free doubly-infinitary lextensive categories on a small,
discrete category \( \cat C \). 

\subsection{More on doubly-infinitary lextensive categories via free coproduct
completions}

As we showed in Lemma \ref{lem:fam.lifts}, \( \Fam \) lifts to a pseudomonad
\( \Fam_{\Ll_\Phi} \) on \( \Ll_\Phi \dash\PsAlg \). Thus, if a category \(\cat
C \) has \( \Phi \)-limits, then \( \Fam(\cat C) \) has \( \Phi \)-limits as
well, which are preserved by the coproduct \( \mathfrak m \colon \Fam(\Fam(\cat
C)) \to \Fam(\cat C) \). In particular,
\begin{itemize}[label=--]
    \item
        if \(\cat C \) has pullbacks, then \( \Fam(\cat C) \) is infinitary
        extensive with pullbacks,
    \item 
        if \(\cat C \) has finite limits, then \( \Fam(\cat C) \) is infinitary
        lextensive,
    \item 
        if \(\cat C \) has small limits, then \( \Fam(\cat C) \) is
        doubly-infinitary lextensive,
    \item 
        if \(\cat C\) has products, then \(\Fam(\cat C)\) is
        doubly-infinitary distributive by \cite[Example~1]{LV24}.
\end{itemize}

So, even if a category \( \cat C \) with products does not have small limits, we
can still establish that the category \( \Fam(\cat C) \) is doubly-infinitary
distributive, and it is extensive \cite{CLW93} by virtue of being a free
coproduct completion. Hence, if \( \Fam(\cat C) \) has small limits, we conclude
that it is doubly-infinitary lextensive, by Theorem \ref{thm:db.inf.lex}.

Before discussing our examples, we let \( \Conn(\cat C) \) be the full
subcategory of a category \( \cat C \) with coproducts consisting of the
\textit{connected objects} \cite[Definition 6.1.3]{BJ01}. 

We begin by noting that the category \( \Cat \eqv \Fam(\Conn(\Cat))  \) of small
categories is doubly-infinitary lextensive, as it is both doubly-infinitary
distributive and extensive, and \( \Cat \) has small limits. Likewise, we can
prove that the category \( \wCPO \eqv \Fam(\Conn(\wCPO)) \) of \( \omega
\)-complete partial orders is also a doubly-infinitary lextensive category.

Again similarly, the category \(\LocConTopl\) of locally connected topological
spaces and continuous functions is doubly-infinitary lextensive. Indeed, from
\cite[Example~8]{LV24}, we learn that \( \LocConTopl \eqv
\Fam(\Conn(\LocConTopl)) \) is both doubly-infinitary distributive and
extensive, as the free coproduct completion of a category with products.
Moreover, \( \LocConTopl \) is a coreflective subcategory of \( \Topl \)
\cite{Gle63}, therefore, \(\LocConTopl\) has small limits, letting us conclude
that \( \LocConTopl \) is doubly-infinitary lextensive.

\subsection{Doubly-infinitary distributive categories that are not extensive}

As observed in \cite{LV24}, a distributive lattice \( \cat D \) (seen as a
distributive, thin category) is extensive if and only if \( \cat D \eqv
\terminal \), so any non-trivial example of a completely distributive lattice \(
\cat D \) will be doubly-infinitary distributive, but not extensive. 

Another example is the full subcategory \( \Set^2_\bullet \) of \( \Set \times
\Set \) consisting of those pairs of sets that are either both empty, or both
non-empty. Since coproducts and products are calculated componentwise in \(
\Set^2_\bullet \), this category is doubly-infinitary distributive as well, but
it is not extensive.

\subsection{Cartesian closedness vs. doubly-infinitary lextensivity}

The category \( \Fam(\Topl) \) is an example of a doubly-infinitary lextensive
category that is \textit{not} cartesian closed. We note that the category \(
\Topl \) of topological spaces is infinitary distributive, but not cartesian
closed. So, by \cite[Theorem 4.2]{LV24}, we conclude that \( \Fam(\Topl) \) is
not cartesian closed as well. However, \( \Fam(\Topl) \) is doubly-infinitary
lextensive, since \( \Topl \) has small limits.

An example of a cartesian closed category with all coproducts and limits, but
not doubly-infinitary lextensive, is given in \cite[Counter-example 2]{LV24}, the
category of Quasi-Borel spaces.

\section{Epilogue}
\label{sect:epilogue}

Motivated particularly by the insights from \cite{LV24b, LV24}, the present work
explores the distributive properties of limits over coproducts through the lens
of two-dimensional monad theory~\cite{BKP89, Luc16}.

We have demonstrated that the canonical (pseudo)distributivity of pullbacks over
coproducts leads to a pseudomonad whose pseudoalgebras are precisely the
infinitary extensive categories equipped with pullbacks. Similarly, the
distributivity of finite limits over coproducts leads to the notion of a
pseudomonad whose $2$-category of pseudoalgebras is precisely the $2$-category
of infinitary lextensive categories. Finally, we showed that the distributivity
of limits over coproducts leads to the concept of \textit{doubly-infinitary
lextensivity}, characterized as infinitary extensive categories that are also
doubly-infinitary distributive as introduced in \cite{LV24}.

We also studied the exponentiable objects of the free completions \(
\Fam(\Lfin(\cat C)) \) and \( \Fam(\Ll(\cat C)) \), confirming that the latter is
a cartesian closed category for any category \(\cat C \). These free completions
enjoy various other known properties since they end up being the free coproduct
completion of a well-behaved category -- we refer the reader to \cite{CLW93,
AR20, LV24b, LV23, LV24} for further results.

\subsection*{Free finite coproduct completion}

By replacing the free coproduct pseudomonad \( \Fam \) with its
\textit{finite} counterpart \( \FinFam \), we recover nearly all of our
results, provided we make some adaptations to be finitary setting. Namely, we
obtain a pseudodistributive law
\begin{equation*}
    \Ll_\Phi \circ \FinFam \to  \FinFam \circ \Ll_\phi,
\end{equation*} 
for any class \( \Phi \) of \textit{finite} categories, by reworking the proof of
Lemma \ref{lem:fam.lifts}. We then obtain two more characterizations:
\begin{itemize}[label=--]
    \item 
        the \( (\FinFam \circ \Lpb) \)-pseudoalgebras are precisely the
        extensive categories with pullbacks,
    \item
        the \( (\FinFam \circ \Lfin) \)-pseudoalgebras are precisely the
        lextensive categories.
\end{itemize}
Most consequentially, an adaptation of our exponentiability results will
confirm that \( \FinFam(\Lfin(\cat C)) \) is a cartesian closed category
whenever \( \cat C \) is \textit{locally finite}.

\subsection*{Descent theory}

\textit{Effective descent morphisms} \cite{Gir64, JT97} (see also \cite[Sections
3 and 4]{Luc21}) are the backbone of Grothendieck's descent theory \cite{JT94,
Luc18a}, which has significant consequences in various fields~\cite{Moe89,
RT94, BJ97}. Besides their wide range of applications, effective descent
morphisms hold intrinsic interest, as their purpose is the reconstruction of
data over the codomain from given data over the domain, plus some additional
algebraic structure. 

Of particular relevance to the present work are effective descent morphisms of
freely generated categorical structures. For instance,  \cite[Section
4]{Pre24} studied categories of descent data for families of
morphisms \( \phi \colon (X_i)_{i \in I} \to Y \), as well as conditions under
which \( \phi \) is an effective descent morphism in \( \Fam(\cat C) \), provided
that \( \cat C \) has finite limits. Namely, it was shown that all such descent
data is a coproduct of connected descent data, which provided simpler conditions
for a morphism \( \phi \colon (X_i)_{i \in I} \to Y \) to be of effective
descent -- this gives evidence that \( \Fam(\cat C) \) is a good proxy for the
study effective descent morphisms of \( \cat C \). This perspective was
useful in the study of effective descent functors between enriched
categories, establishing precise connections between the work of \cite{Cre99},
\cite[Theorem 9.11]{Luc18a}, \cite{PL23a}, and the work of \cite{RT94, CH04,
CH17}.

Since the free completions \( \Dist(\cat C) \) and \( \Fam(\Ll(\cat C)) \) are
even better behaved categories, enjoying properties such as cartesian
closedness,  an inquiry on whether studying effective descent morphisms in such
free completions seems to be a reasonable avenue for future work.

\subsection*{Non-canonical isomorphisms}

In analogy with \cite[Subsection 5.2]{LV24}, we may use the results of
\cite{Luc19} to prove that a category \( \cat C \) is \( \Phi \)-coproduct
distributive if it has coproducts, \( \Phi \)-limits, and there exists a(ny)
invertible natural isomorphism
\begin{equation*}
    \begin{tikzcd}
        \displaystyle\sum_{x \in \lim\limits_{j \in \cat J} UF} 
        \displaystyle\lim_{j \in \cat J} F_{j,x_j}
        \ar[r,"\iso"]
        & \displaystyle\lim_{j \in \cat J} 
          \displaystyle\sum_{x \in UFj} F_{j,x}
    \end{tikzcd}
\end{equation*}
for every functor \( F \colon \cat J \to \Fam(\cat C) \) with \( \cat J \in \Phi
\), where we let \( U \colon \Fam(\cat C) \to \Set \) be the functor that
outputs the underlying indexing set. 

More generally, if we have a pseudomonad \( \Tt \) on \( \CAT  \) and a
pseudodistributive law \( \delta \colon \Tt \circ \Fam \to \Fam \circ \Tt \),
then for any category \( \cat C \) with coproducts and the structure of a  \(
\Tt \)-pseudoalgebra, the coproduct functor
\begin{equation*}
    \sum \colon \Fam(\cat C) \to \cat C
\end{equation*}
is an oplax \( \Tt \)-morphism by doctrinal adjunction \cite{Kel74, Luc18x}. The
(codual version of the) techniques of non-canonical isomorphisms from
\cite{Luc19} can be applied just as well to this setting.

\subsection*{Comparison to Cockett and Lack \cite{CL01}}
In \cite{CL01}, the authors address the extensive completion \( \Bool(\cat C) \)
of a distributive category \( \cat C \), whereas our work concerns, among
others, the free lextensive category on any (possibly non-distributive) category
\( \cat C \). 

If \( \cat C \) is already distributive, this raises the question of whether our
completion coincides with Cockett and Lack's construction. The answer is ``no''.
In our setting, the canonical inclusion
\begin{equation}
    \label{eq:free.emb}
    \yy \colon \cat C \to \FinFam(\Lfin(\cat C))
\end{equation}
does \textit{not} preserve finite coproducts nor finite limits, as we are
dealing with a \textit{free} completion. In contrast, the embedding constructed
in \cite{CL01}
\begin{equation*}
    I \colon \cat C \to \Bool(\cat C)
\end{equation*}
preserves coproducts and products, so this is not a free completion. In fact, 
if \( \cat C \) is extensive to begin with, we obtain an equivalence \( I \colon
\cat C \eqv \Bool(\cat C) \), but this is far from the case for the
embedding~\eqref{eq:free.emb}.

This distinction between free and non-free completions is encompassed by the
difference between lax idempotent monads and pseudo-idempotent monads, which is
a topic we plan to discuss in future work.

\bibliographystyle{plain}
\bibliography{LPV24v2}

\end{document}